\newcolumntype{b}{X}
\newcolumntype{s}{>{\hsize=.5\hsize}X}
\newcolumntype{L}{>{\displaystyle}l}
\newcolumntype{C}{>{\displaystyle}c}
\newcolumntype{R}{>{\displaystyle}r}
\def \R{\mbox{${\mathbb R}$}} % R dos reais
\def \C{\mbox{${\mathbb C}$}} % C dos complexos
\def \s{\mbox{\rm{${\textbf{S}}$}}}
\def \r{\mbox{\rm{${\bf \Re}$}}}
\def \D{\mbox{\rm{${\textbf{D}}$}}}
\def \T{\mbox{\rm{${\textbf{T}}$}}}
\def \z{\mbox{\rm{${\textbf{Z}}$}}}
\def \P{\mbox{${\mathcal{P}}$}}
\def \PP{\mbox{${\vec{\mathcal{P}}}$}}
\def \Q{\mbox{${\mathcal{Q}}$}}        %Q sem flecha
\def \QQ{\mbox{${\vec{\mathcal{Q}}}$}} %Q com flecha
\newcommand{\fix}{\mathrm{Fix}}
\DeclareMathOperator{\im}{Im}
\DeclareMathOperator{\aut}{Aut}
\DeclareMathOperator{\re}{Re}
\newtheorem{theorem}{Theorem}[section]
\newtheorem{lemma}[theorem]{Lemma}
\newtheorem{proposition}[theorem]{Proposition}
\newtheorem{remark}[theorem]{Remark}
\begin{document}

\title{Normal forms of bireversible vector fields}

\author[P. H. Baptistelli, M. Manoel and I.O. Zeli]
{P. H. Baptistelli$^{1}$, M. Manoel$^2$ and I. O. Zeli$^{3}$}

\address{$^1$ Department of Mathematics, UEM, C.P.  5790, 87020-900, Maring\'a-PR, Brazil.} 
\email{phbaptistelli@uem.br}

\address{$^2$ Department of Mathematics, ICMC--USP, C.P. 668, 13560-970 S\~ao Carlos-SP, Brazil.} \email{miriam@icmc.usp.br}

\address{$^3$ Department of Mathematics, IMECC-UNICAMP,  C.P. 651,  13083--859, Campinas-SP, Brazil.}
\email{irisfalkoliv@ime.unicamp.br} 

\subjclass[2010]{7C80, 34C20, 13A50}

%\keywords{....}

\maketitle

%%%%%%%%%%%%%%%%%%%%%%%%%%%%%%%%%

\begin{abstract}
In this paper we adapt the method of  [P. H. Baptistelli, M. Manoel and I. O. Zeli. Normal form theory for  reversible equivariant vector fields. {\it Bull. Braz. Math. Soc.}, New Series \textbf{47} (2016), no. 3,  935-954] to obtain normal forms of a class of smooth bireversible vector fields. These are  vector fields  reversible under the action of  two  linear involution and whose linearization has a nilpotent part and a semisimple part with purely imaginary eigenvalues. We show that these  can be put formally in normal form preserving the reversing symmetries and their linearization. The approach we use is based on an algebraic structure of the set of this type of vector fields. Although this can lead to extensive calculations in some cases, it is in general a simple and algorithmic way  to compute  the normal forms. We present some examples, which are Hamiltonian systems without resonance for one case and other cases with certain resonances. 
 
\end{abstract}

\section{Introduction}
Many problems in dynamical systems carry special structures to be kept preserved in their systematic qualitative study.  An important such feature is the presence of symmetries acting on  the state variables, implying a time-preserving invariance of the dynamics under this action %An example is a set of all-to-all coupled identical oscillators, which is symmetric under permutations of the variables of each oscillator. Another related %
 The particular case of reversing symmetries  also implies an invariance of the dynamics, but in this case with a reversion in time. A simple example is the dynamics of the ideal pendulum (no energy loss), which is reversible with respect to an involution given by the reflection across its vertical axis.  Systems with both symmetries and reversing symmetries,   the so-called reversible equivariant systems, have been studied largely by many authors in a variety of view points (see \cite{manoel1, manoel3, manoel2, BMZ, Buzzi, marco1, Hoveijn,  lima, marco2, Vander}). This paper is a contribution to the local qualitative analysis of bireversible systems defined on a finite dimensional vector space $V$, namely systems in presence of two linear  involutory reversing symmetries $\varphi$ and $\psi$ acting on $V$. An involution is an invertible mapping which is its own inverse. We assume that the two involutions commute,  so the group in action is ${\bf Z}_2 \times {\bf Z}_2$, one component being generated by $\varphi$ and the other by $\psi$. In general,  symmetries and reversing symmetries in a group $\Gamma$ are given in algebraic terms through a group homomorphism $\Gamma \to {\bf Z}_2=\{\pm 1\}$, whose kernel is the subgroup of symmetries, the reversing symmetries being the elements in its complement. 
Hence, in the case under consideration,  ${\bf Z}_2 \times {\bf Z}_2 \to {\bf Z}_2=\{\pm 1\}$ 
is the epimorphism which assumes $-1$ on $\varphi$ and $\psi$, and  1 on the identity element and on the composition $\varphi\circ\psi$.

Our study is based on normal form theory, which is  applied to the system around an equilibrium point, assumed to be the origin. Taking coordinates, this method consists of successive changes of coordinates of the form $I + \xi_k$, for $k \geq 2$, where $I$ is the identity and $\xi_k$ is a homogeneous polynomial of degree $k$. These are to be chosen to put the system in a ``simpler" form at each degree-$k$ level, leaving unchanged the lower-order terms. The vector field obtained is then formally conjugate to the original one, in the sense that their Taylor series are conjugate as 
formal vector fields. It is an interesting fact that the normal form reduction process can introduce additional symmetries into the problem (see \cite[XVI Theorem 5.3]{golub}). In fact, the normal form can be chosen to be equivariant with respect to the one-parameter group given by the closure
\begin{equation}
\s=\overline{\left\{ e^{ sL^t}, s\in \R \right\}},
\label{S}
\end{equation}
where $L$ is the linearization of the vector field at the equilibrium point. As a consequence, if the vector field possesses a group $\Gamma$ of symmetries, then the normal form is $\s \times \Gamma-$equivariant (see \cite[Theorem XVI 5.9]{golub}). In \cite{BMZ} we prove that if the vector field is $\Gamma-$reversible-equivariant, then the truncated normal
form is  $\s \rtimes \Gamma-$reversible-equivariant (Theorem~\ref{thm:NORMALFORM}), and an algorithm is given for the computation of this normal form, based on algebraic invariant theory methods. \\

The aim of this paper is to show that we can adapt the method  developed in \cite{BMZ} for the special case when $\Gamma$ is generated by two commuting involutions when they both act as reversibilities.   The idea follows three steps. We first  obtain an algorithm to compute generators for the module of mappings that are reversible equivariant under a group which is a semi-direct product $\Gamma_1 \rtimes {\bf Z}_2$. This procedure contains an algorithm given  in \cite{manoel1}  as a subroutine (\cite[Algorithm 3.7]{manoel1}), which is applied to $\Gamma_1$ and it is combined with the  construction of a transfer operator to deal with the other component of the whole group. As an intermediate step, elements in $\Gamma_1$ are changed to act as symmetries, which in practice means that we consider, at this stage, a new group homomorphism containing $\Gamma_1$ inside its kernel. Finally, we re-apply the first step to $\Gamma_1 \rtimes ({\bf Z}_2 \times {\bf Z}_2)$ replacing $\Gamma_1$ in the first step  by $\Gamma_1 \rtimes {\bf Z}_2$. It should be clear that we present this method  as a simpler alternative to the algorithm given in \cite{BMZ} for the special class of semi-direct products. 
Some results hold for the more general case when the fist component $\Gamma_1$ of the semi-direct product is any compact Lie group, so we shall present these in this generality.

%The aim of this paper is to show that we can adapt the method  developed in \cite{BMZ} for the special case when $\Gamma$ is generated by two commuting involutions when they both act as reversibilities.

We look at   ${\bf Z}_2 \times {\bf Z}_2-$reversible-equivariant systems
\[ \dot{x} = X(x) \]
defined on $\R^{2n+2}$, that is, when the vector field $X$ anti-commutes with the group generators,
\[ X \varphi \ = \ - \varphi X, \ \ X \psi \ = \ - \psi X. \] 
We assume that the  linearization of $X$ about the origin has matricial form with a
2-dimensional nilpotent part and a
semisimple part with purely ima\-ginary eigenvalues,
\begin{equation}\label{L}
L = \begin{pmatrix}
  0 & 1 &  &  &  &  &  \\
  0 & 0 &  &  &  &  &  \\
   &  & 0 & \omega_1 &  &  &  \\
   &  & -\omega_1 & 0 & &  &  \\
   &  &  &  &  \ddots & & \\
  &  &  &  &  & 0 & \omega_n \\
   &  &  &  &  & -\omega_n & 0
\end{pmatrix},\end{equation} for nonzero $\omega_i,$ $i=1, \cdots, n.$
Here we deduce the normal forms for the non-resonant case and cases under certain resonances. The main motivation is that all the examples we consider are Hamiltonian systems, for which not so much work has been done due to the difficulty in obtaining normal forms under resonances. The algorithm we present here computes normal forms under any resonance condition, and that is why we have included four diferent cases.
These cases generalizes two examples presented in \cite{BMZ}. The normal form for the nonresonant case which is reversible under a unique involution appears in \cite{lima}, where the authors use the classical method developed by Belitskii (see \cite{belitskii}) in the context without symmetries to compute first a pre normal form, imposing the reversibility afterwards. A comparison between this computation and the method we present shows clearly that the computation is largely simplified if we do not leave symmetries or reversibilities to be taken  into consideration only {\it a posteriori}. Let us remark that if  $\varphi$ and $\psi$ do not commute yet generate a finite group, then this is a dihedral group  $\D_m$ for some $m \geq 3.$ This leads to the invariant theory for the group $(\s \times \z_m) \rtimes \z_2,$ which can also be treated using the results of the present paper. \\  

The paper is organized as follows: In Section~\ref{sec:PRELIMINARIES} we fix our notation,  give some definitions and recall some results. In Section~\ref{sec:SEMIDIRECT}
we present our main results, namely the invariant theory for the groups that are given as a semi-direct product $\Gamma_1 \rtimes {\bf Z}_2$. These are applied to give an algorithm to compute a set of generators for the reversible equivariants under this type of groups. Finally, Section~\ref{sec:NORMAL FORMS} adresses the computation of the normal forms of bireversible systems whose linearization is given in (\ref{L}) under distinct re\-sonance conditions.

%%%%%%%%%%%%%%%%%%%%%%%%%%%%%%%%%%%%%

\section{Preliminaries} \label{sec:PRELIMINARIES}

Let $\Gamma$ be a compact Lie group  acting linearly on a finite-dimensional real vector space $V$ by $\Gamma \times V \to V,$  $(\gamma, v) \mapsto \gamma v.$ 
In what follows we shall also use the representation $\rho: \Gamma \to {\bf GL}(n)$ associated with this action, namely  $\rho(\gamma)v = \gamma v,$ for all $\gamma \in \Gamma$ and $v \in V.$ 
We also consider a group homomorphism
\begin{equation*}
\label{defisigma}
\sigma: \Gamma \to \z_2,
\end{equation*}
where $\z_2 =\{\pm 1 \}$ is the multiplicative group, 

If $\P_V$ denotes the vector space of polynomial functions $V \to \R$ and $\PP_V$ denotes the vector space of polynomial mappings $V \to V, $ we recall that 
a polynomial function $f \in \P_V$ is called $\Gamma-${\it invariant} if
\[f(\gamma v)=f(v), ~\forall \gamma \in \Gamma, \ v \in V,
\]and it is called $\Gamma-${\it anti-invariant} if
\[
f(\gamma v)=\sigma(\gamma)f(v), ~\forall \gamma \in \Gamma, \ v \in V.
\] A polynomial mapping $g \in \PP_V$ is $\Gamma-${\it equivariant} if
\begin{equation}\nonumber
g(\gamma v)=\gamma g(v),~\forall \gamma \in \Gamma, \ v \in V,
\end{equation}
and it is  $\Gamma-${\it reversible-equivariant} if
\begin{equation}\nonumber
g(\gamma v)=\sigma(\gamma) \gamma g(v),~\forall \gamma \in \Gamma, \ v \in V.
\end{equation}
\noindent Motivated by the nomenclature above, when the homomorphism $\sigma$ is 
nontrivial, then an element $\gamma \in \Gamma$ is called  {\it symmetry} of $\Gamma$ if   $\sigma(\gamma)= 1$, and {\it reversing symmetry}  if  $\sigma(\gamma)=-1$.   We denote by $\Gamma_+ = \ker \sigma$ the subgroup of symmetries of $\Gamma, $ which is a normal subgroup of $\Gamma$ of index $2,$ and $\Gamma = \Gamma_+ \dot{\cup} ~\delta \Gamma_+$ for an arbitrary reversing symmetry $\delta \in \Gamma$. If $\sigma$ is trivial, then all elements in the group are symmetries. 

We shall denote by $\P(\Gamma)$ the ring of the $\Gamma-$invariant polynomial functions, by $\Q_\sigma(\Gamma)$ the module of the $\Gamma-$anti-invariant polynomial functions, by $\PP(\Gamma)$ the module of the $\Gamma$-equivariant polynomial mappings and by $\QQ_\sigma(\Gamma)$ the module of the $\Gamma-$reversible-equivariant polynomial mappings, over the ring $\P(\Gamma)$. The modules $\Q_\sigma(\Gamma),$ $\PP(\Gamma)$ and $\QQ_\sigma(\Gamma)$ are finitely generated and graded over the ring $\P(\Gamma),$ which is also finitely generated and graded. When $\sigma$ is trivial, then $\P(\Gamma)$ and $\Q_\sigma(\Gamma),$ as well as $\PP(\Gamma)$ and $\QQ_\sigma(\Gamma)$, coincide.

In \cite{BMZ},   a method is given to obtain formal normal forms of reversible
equivariant vector fields under the action of $\Gamma$ based on the
classical method of normal forms combined with tools from invariant
theory. More specifically, consider a system of ODEs
\begin{equation}
\label{system1}
\dot{x}= X(x),~ x\in V,
\end{equation}
where $X$ is a $C^\infty$ $\Gamma$-reversible-equivariant vector field. From the linearization $L$ of $X$ at the origin, consider the group $\s$ as defined in (\ref{S}) which acts on $V$ by matrix product. The algebraic method given in \cite{BMZ} consists of computing  the truncated normal form of (\ref{system1}) at any degree via the computation of the generators of the module of homogeneous reversible equivariants under the group $\s \rtimes \Gamma:$ 

 \begin{theorem} (\cite[Theorem 4.7]{BMZ}) \label{thm:NORMALFORM} Let $\Gamma$ be a compact Lie group acting linearly on $V$ and consider $X: V \to V$ a smooth $\Gamma-$reversible-equivariant vector field, $X(0)=0$ and $L=(dX)_0$. Then $(\ref{system1})$ is formally conjugate to
\begin{equation} \nonumber
\dot{x}=Lx + g_2(x) + g_3(x) + \ldots
\end{equation}
where, for each $k \geq 2 $, $g_k$ is a homogeneous of degree $k$ in $\QQ_\sigma(\s \rtimes \Gamma).$
\end{theorem}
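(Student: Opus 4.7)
The plan is to adapt the classical Belitskii--Elphick normal form procedure, carrying the $\Gamma$-reversible-equivariant structure along at every step. Writing $X = L + \sum_{k \geq 2} X_k$ with $X_k$ homogeneous of degree $k$, assume inductively that degrees $2, \dots, k-1$ are already in the desired form. A near-identity polynomial change of coordinates $y = x + \xi_k(x)$, with $\xi_k$ homogeneous of degree $k$, leaves the lower-order terms untouched and replaces the degree-$k$ term by
\begin{equation*}
\tilde X_k \;=\; X_k - \mathrm{ad}_L(\xi_k), \qquad \mathrm{ad}_L(\xi_k)(x) := L\,\xi_k(x) - (d\xi_k)_x\, L\,x .
\end{equation*}
The task at each level is therefore to choose a complement of $\im \mathrm{ad}_L$ inside the space $\mathcal{H}_k$ of homogeneous degree-$k$ polynomial vector fields, and to take $\tilde X_k$ in it.

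First I would verify that $\mathrm{ad}_L$ stabilizes $\QQ_\sigma(\Gamma) \cap \mathcal{H}_k$, so that the inductive reduction can be carried out entirely within the reversible-equivariant module. Differentiating the identity $X(\gamma v) = \sigma(\gamma)\gamma X(v)$ at the origin yields $L\gamma = \sigma(\gamma)\gamma L$, and a direct computation then shows that $\xi_k \in \QQ_\sigma(\Gamma)$ implies $\mathrm{ad}_L(\xi_k) \in \QQ_\sigma(\Gamma)$.

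Next I would introduce the Elphick complement. Endow $\mathcal{H}_k$ with a $\Gamma$-invariant Fischer-type inner product; the adjoint of $\mathrm{ad}_L$ for this inner product is $\mathrm{ad}_{L^t}$, yielding the orthogonal decomposition
\begin{equation*}
\mathcal{H}_k \;=\; \im\,\mathrm{ad}_L \;\oplus\; \ker\,\mathrm{ad}_{L^t} .
\end{equation*}
The kernel $\ker\,\mathrm{ad}_{L^t}$ consists of those degree-$k$ maps that commute with the flow of $L^t$, hence is exactly the space of $\s$-equivariants of degree $k$. Since $\Gamma$ acts orthogonally (it is compact), the relation $L\gamma = \sigma(\gamma)\gamma L$ transposes to $L^t\gamma = \sigma(\gamma)\gamma L^t$, so $\mathrm{ad}_{L^t}$ also preserves $\QQ_\sigma(\Gamma)$ and the decomposition restricts to the reversible-equivariant subspace. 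Choosing $\tilde X_k \in \QQ_\sigma(\Gamma) \cap \ker\,\mathrm{ad}_{L^t}$ then solves the homological equation at every degree while preserving the reversing symmetries.

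It remains to identify this intersection with the degree-$k$ component of $\QQ_\sigma(\s \rtimes \Gamma)$. Iterating $L^t\gamma = \sigma(\gamma)\gamma L^t$ gives $\gamma^{-1} e^{sL^t}\gamma = e^{\sigma(\gamma)\,s\,L^t}$, so $\Gamma$ normalizes $\s$ inside $\mathrm{GL}(V)$ through $s \mapsto \sigma(\gamma)s$; the group generated by $\s$ and $\Gamma$ is therefore the semi-direct product $\s \rtimes \Gamma$, and $\sigma$ extends to this group by being trivial on $\s$. A map which is simultaneously $\s$-equivariant and $\Gamma$-reversible-equivariant is exactly $(\s \rtimes \Gamma)$-reversible-equivariant for this extended character, which finishes the induction and produces the claimed formal conjugacy. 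I expect the main technical obstacle to be choosing the Fischer inner product to be $\Gamma$-invariant while simultaneously ensuring compatibility of $\mathrm{ad}_L$ and $\mathrm{ad}_{L^t}$ with $\QQ_\sigma(\Gamma)$; this is what allows the classical Belitskii decomposition to coexist with the reversible-equivariant structure at each step.
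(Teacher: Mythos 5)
First, note that the paper does not prove this statement at all: it is quoted verbatim as \cite[Theorem 4.7]{BMZ}, so there is no internal proof to compare against; your proposal has to stand on its own. Its overall architecture (degree-by-degree Lie-transform reduction, Fischer inner product, the Elphick complement $\ker\mathrm{ad}_{L^t}$ identified with the $\s$-equivariants, and the identification of $\PP(\s)\cap\QQ_\sigma(\Gamma)$ with $\QQ_\sigma(\s\rtimes\Gamma)$ via the semidirect-product structure) is indeed the right skeleton and matches what is done in \cite{BMZ}.

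However, there is a genuine error at the step you flag as a ``direct computation.'' The homological operator does \emph{not} stabilize $\QQ_\sigma(\Gamma)\cap\mathcal{H}_k$; it interchanges equivariants and reversible-equivariants. Indeed, from $L\gamma=\sigma(\gamma)\gamma L$ and $\xi_k(\gamma v)=\sigma(\gamma)\gamma\xi_k(v)$ one gets $L\xi_k(\gamma v)=\sigma(\gamma)^2\gamma L\xi_k(v)=\gamma L\xi_k(v)$ and likewise $(d\xi_k)_{\gamma v}L\gamma v=\gamma(d\xi_k)_vLv$, so $\mathrm{ad}_L(\xi_k)$ is $\Gamma$-\emph{equivariant}, not reversible-equivariant; symmetrically, $\mathrm{ad}_L$ maps $\PP(\Gamma)\cap\mathcal{H}_k$ into $\QQ_\sigma(\Gamma)\cap\mathcal{H}_k$. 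This is not a cosmetic issue: if you generate the coordinate change by a reversible-equivariant $\xi_k$, the conjugated vector field ceases to anticommute with the reversing symmetries, so the induction hypothesis is destroyed. The correct setup is to take $\xi_k\in\PP(\Gamma)\cap\mathcal{H}_k$ (so the near-identity change commutes with $\Gamma$ and preserves the class of $X$), note that then $\mathrm{ad}_L(\xi_k)$ lands in $\QQ_\sigma(\Gamma)\cap\mathcal{H}_k$ where $X_k$ lives, and establish the decomposition $\QQ_\sigma(\Gamma)\cap\mathcal{H}_k=\mathrm{ad}_L\bigl(\PP(\Gamma)\cap\mathcal{H}_k\bigr)\oplus\bigl(\ker\mathrm{ad}_{L^t}\cap\QQ_\sigma(\Gamma)\cap\mathcal{H}_k\bigr)$, using the $\Gamma$-invariance of the inner product and $L^t\gamma=\sigma(\gamma)\gamma L^t$ to see that the orthogonal projection onto $\ker\mathrm{ad}_{L^t}$ preserves $\QQ_\sigma(\Gamma)$ and that the relevant part of $\im\mathrm{ad}_L$ is hit by equivariant generators. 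With that repair, your final identification of the complement with the degree-$k$ part of $\QQ_\sigma(\s\rtimes\Gamma)$ goes through as you describe.
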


We remark that there are cases for which the group $\s$ fails to be compact. Nevertheless,  the tools obtained in \cite{manoel1} and \cite{BMZ} can still be applied as long as the ring $\P(\s)$ and the module $\PP(\s)$ is finitely generated.\\

When $L$ has only purely imaginary eigenvalues we can characterize $\s$ in a particular way. For this, we consider the Jordan-Chevalley decomposition for $L,$ $L = D + N,$ where $D$ is diagonal and $N$ is nilpotent with $DN = ND.$  Then the general form of $\s$ may be deduced as:

\begin{proposition} \label{prop:SFORM} (\cite[Proposition XVI 5.7]{golub}) Let $L = D + N$ be the Jordan-Chevalley decomposition for $L$ and let $k$ be the number of algebraically independent eigenvalues in $D$. If $N=0,$ then $\s = T^k$ and if $N \neq 0,$ then $\s = \r \times T^k,$ where $\r \simeq \bigg\{ \begin{pmatrix}
  1 & 0 \\
  s & 1
\end{pmatrix}: s \in \mathbb{R} \bigg \}$ and $T^k = {\bf S}^1 \times \ldots \times {\bf S}^1$ is the $k-$torus.
\end{proposition}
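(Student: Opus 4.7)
The plan is to split the analysis via the Jordan--Chevalley decomposition $L = D + N$ with $D$ semisimple, $N$ nilpotent and $[D,N]=0$. Transposition preserves commutativity, so
\[
e^{sL^t} \;=\; e^{sD^t}\, e^{sN^t}, \qquad s\in\R,
\]
with the two factors lying in commuting closed subgroups of $\GL(V)$. The goal is then to identify the closure of each factor separately and recover $\s$ as the product.

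For the semisimple part, the eigenvalues of $D$ coincide with those of $L$ (the nilpotent $N$ contributes only zero eigenvalues), hence are purely imaginary by hypothesis. In a suitable real basis, $D^t$ is block-diagonal with $2\times 2$ skew-symmetric rotation blocks of nonzero frequencies $\omega_1,\dots,\omega_m$ (plus a zero block when $N\neq 0$), so $\{e^{sD^t}:s\in\R\}$ sits inside the maximal torus $T^m$ of simultaneous rotations; by the Kronecker--Weyl density theorem its topological closure is the subtorus of dimension $k = \dim_{\mathbb{Q}}\langle \omega_1,\dots,\omega_m\rangle$, precisely the number of algebraically independent eigenvalues of $D$. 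For the nilpotent part, $e^{sN^t}$ is polynomial in $s$, so $\{e^{sN^t}:s\in\R\}$ is already closed in $\GL(V)$ and, after adapting the basis to the nilpotent block, identifies matricially with the group $\r$ of the statement. When $N=0$ the second factor is trivial and $\s = T^k$; otherwise, commutativity of $D^t$ and $N^t$ makes $T^k$ and $\r$ commute in $\GL(V)$ with trivial intersection (a compact torus meets a unipotent subgroup only at the identity), so they generate a closed internal direct product $\r\times T^k$ containing $\{e^{sL^t}\}$ and hence containing $\s$.

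The main obstacle is the reverse inclusion $\r\times T^k\subseteq \s$, i.e.\ showing that the closure of the single curve $s\mapsto e^{sL^t}$ fills the whole product rather than just a one-dimensional diagonal. I would handle this by exploiting the fact that the relations among the matrix entries of $e^{sL^t}$ decouple: the entries of the unipotent block satisfy only polynomial identities in the parameter, while those of the rotation blocks satisfy only trigonometric ones involving the $\omega_j$, and no algebraic relation links the two families. Combined with the Kronecker--Weyl density on the torus factor, this independence argument allows any prescribed pair $(u,g)\in\r\times T^k$ to be realized as a limit along $\{e^{sL^t}\}$, and is the delicate technical step that completes the proof.
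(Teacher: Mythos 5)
The paper gives no argument for this proposition at all; it is quoted verbatim from \cite[Proposition XVI 5.7]{golub}. So the only thing to assess is your argument, and it has a genuine gap exactly at the step you yourself flag as delicate: the reverse inclusion $\r \times T^k \subseteq \s$ when $N \neq 0$. In fact that inclusion is \emph{false} for the Euclidean closure, and no independence argument on matrix entries can rescue it. Since $N^t \neq 0$ is nilpotent, some entry of $e^{sN^t}$ is a nonconstant polynomial in $s$, so $s \mapsto e^{sN^t}$ is an injective \emph{proper} embedding, while $e^{sD^t}$ ranges in a compact set. Hence if $e^{s_nL^t} = e^{s_nN^t}e^{s_nD^t}$ converges in $\GL(V)$, the sequence $(s_n)$ is bounded, and every accumulation point $s_0$ yields the same limit $e^{s_0L^t}$, which already lies on the curve. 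Therefore $\{e^{sL^t}: s\in\R\}$ is closed, and its topological closure is a one-dimensional group isomorphic to $\R$, not the $(k+1)$-dimensional $\r\times T^k$. Concretely, realizing a pair $(u,g)$ as a limit forces $e^{s_nN^t}\to u$, hence $s_n\to s_0$ with $u=e^{s_0N^t}$, and then $e^{s_nD^t}\to e^{s_0D^t}$ is completely determined: there is no residual freedom on the torus factor for Kronecker--Weyl to exploit.

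What is actually true, and what normal form theory needs, is the statement for the Zariski closure (algebraic hull) of the one-parameter group --- equivalently, that a polynomial mapping commutes with every $e^{sL^t}$ if and only if it commutes with every element of $\r\times T^k$. The clean route is through the adjoint operator: on each finite-dimensional space of homogeneous polynomial mappings, $\adlt = Ad_{D^t}+Ad_{N^t}$ is itself a Jordan--Chevalley decomposition (the summands commute, the first is semisimple, the second nilpotent), and for any such decomposition $\ker(S+N)=\ker S\cap \ker N$. Hence equivariance under the curve $\{e^{sL^t}\}$ is equivalent to simultaneous equivariance under $\{e^{sN^t}\}\simeq \r$ and under $\overline{\{e^{sD^t}\}}=T^k$; this last closure is where Kronecker--Weyl legitimately enters, as in your $N=0$ case, which is correct. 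Your first three steps --- the splitting $e^{sL^t}=e^{sD^t}e^{sN^t}$, the identification of the two factors, and the forward inclusion $\s\subseteq\r\times T^k$ (a closed subgroup, being the product of a closed and a compact subgroup) --- are sound; the missing idea is that the asserted equality of groups must be read algebraically, via $\ker\adlt$ or the Zariski closure, rather than as an equality of Euclidean closures.
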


\section{Invariant theory for the group $\Gamma_1 \rtimes {\bf Z}_2$}  \label{sec:SEMIDIRECT}

Let $\Gamma_1$ and $\Gamma_2$ be compact Lie groups  acting linearly on $V$. Let $\rho: \Gamma_1 \to {\bf GL}(n)$  and : $\eta: \Gamma_2 \to {\bf GL}(n)$ denote  the representations of $\Gamma_1$ and $\Gamma_2$ on $V$, respectively. A semidirect product $\Gamma_1 \rtimes \Gamma_2$ is the direct product  $\Gamma_1 \times \Gamma_2$ as a set with the group operation
$$(\gamma_1,\gamma_2) \cdot_{\mu} (\tau_1,\tau_2) = (\gamma_1 \mu(\gamma_2) \tau_1, \gamma_2 \tau_2)$$
induced by a homomorphism $\mu : \Gamma_2 \to \aut (\Gamma_1)$. In this case $\Gamma_1$ is a normal subgroup of $\Gamma_1 \rtimes \Gamma_2$ and $\Gamma_1 \rtimes \Gamma_2 / \Gamma_1 $ is isomorphic to $\Gamma_2.$ If $\mu$ is trivial, the groups commute
and semidirect is direct product as a group. Now, we define the operation $(\Gamma_1 \rtimes \Gamma_2) \times V \to V$ by \begin{equation}
(\gamma_1,\gamma_2) v =  \rho(\gamma_ 1) (\eta(\gamma_2) v).
\label{acaoproduto}
\end{equation}

From \cite[Proposition 3.1]{BMZ} we have that $(\ref{acaoproduto})$ defines an action of the semidirect product $\Gamma_1\rtimes \Gamma_2$ on $V$ if, and only if,  \begin{equation} \label{action}\rho(\mu(\gamma_2)(\gamma_1))=\eta(\gamma_2)\rho(\gamma_1)\eta(\gamma_2)^{-1},\end{equation} which highlights the non-commutativity of the $\Gamma_1$ and $\Gamma_2$ actions if and only if $\mu$ is nontrivial.

In this work, we assume that $\Gamma_1$ and $\Gamma_2$ admit a semidirect product $\Gamma = \Gamma_1 \rtimes \Gamma_2$ with a representation $\varrho(\gamma_1,\gamma_2) = \rho(\gamma_1)\eta(\gamma_2)$ under the condition (\ref{action}). In this case
\begin{equation*} \label{relation}(\gamma_1, \gamma_2) v = \gamma_2 (\mu(\gamma_2^{-1})\gamma_1 v),
\end{equation*}
for all $(\gamma_1,\gamma_2) \in \Gamma$ and $v \in V.$ To simplify notation, from now on we shall write each representation $\rho(\gamma_1)$ and 
$\eta(\gamma_2)$  by $\gamma_1$ and $\gamma_2$, respectively. We also consider $\Gamma_1$ and $\Gamma_2$ endowed with epimorphisms
\begin{equation*}\label{defsigmas}
\sigma_1: \Gamma_1 \to \z_2 \quad \text{and} \quad \sigma_2: \Gamma_2 \to \z_2. \end{equation*}
%where $\sigma_i(\gamma_i)= 1$ if $\gamma_i$ is a symmetry of $\Gamma_i$ and %$\sigma_i(\gamma_i)=-1$ if $\gamma_i$ is a reversing symmetry of %$\Gamma_i,$ for $i=1,2.$ 

  We now construct a mapping on $\Gamma= \Gamma_1 \rtimes \Gamma_2 $ in order to ``preserve''  what is a symmetry and what is a reversing symmetry on each component. This can be done in a natural way by the produt epimorphism,
\begin{equation}\label{sigma}
\begin{tabular}{cccl}
$\sigma:$ &$ \Gamma_1 \rtimes \Gamma_2 $ & $\rightarrow$ & $\z_2$ \\
          &$(\gamma_1, \gamma_2)$        &$\mapsto $     &$\sigma_1(\gamma_1) \sigma_2(\gamma_2)$.
\end{tabular}
\end{equation}
We notice that $\sigma$ is a group homomorphism if, and only if, for each $\gamma_2 \in \Gamma_2,$ the automorphism $\mu(\gamma_2)$ preserves the symmetries and reversing symmetries of $\Gamma_1,$ that is, $\sigma_1(\mu(\gamma_2) \gamma_1) = \sigma_1(\gamma_1),$ for all $(\gamma_1,\gamma_2) \in \Gamma$. In this work, this invariance of $\sigma_1$ is assumed throughout.\\

For the bireversible systems treated in Section~\ref{sec:NORMAL FORMS}, the groups $\Gamma_1$ and $\Gamma_2$ above are two commuting distinct  representations of ${\bf Z}_2$. In what follows we restrict to the case  $\Gamma_2 = \z_2^{\kappa}$, generated by a reversing symmetry $\kappa$, so that $\Gamma_1 \rtimes \z_2^{\kappa} / \Gamma_1$ is isomorphic to $\z_2 = \{\pm 1\}.$ In this case, we have assured the existence of the epimorphism $\tilde{\sigma} : \Gamma_1 \rtimes \z_2^{\kappa} \to \z_2,$ 
\begin{equation}\label{sigmatilde}
\tilde{\sigma}(\gamma_1, \gamma_2) = \sigma_2(\gamma_2).
\end{equation} 
This construction is a way to look at $\Gamma_1$ as a group whose elements act as symmetries,  since $\Gamma_1= \ker (\tilde{\sigma})$, and  it is an intermediate step  to obtain generators of $\P(\Gamma_1)$ as a module over $\P(\Gamma_1 \rtimes \z_2^{\kappa})$ (for the proof of Theorem~\ref{thm:SL} below). For that, we define the operators $\tilde{R}, \tilde{S}: \P(\Gamma_1) \to \P(\Gamma_1)$ by

%$$
%\tilde{R}(f)(v) = \frac{1}{2}\big(f(v) + f(\tau v) \big) \quad \text{and} \quad \tilde{S}(f)(v) = %\frac{1}{2}\big(f(v) - f(\tau v) \big),
%$$
%where $\tau \in \Gamma_1 \rtimes \Gamma_2 \setminus \Gamma_1$ is fixed (but arbitrary). In this case, $\tau = (\gamma_1, \kappa)$ for some $\gamma_1 \in \Gamma_1$, and $\tilde{R},$ $\tilde{S}$ become

\begin{equation}\label{R&S}
\tilde{R}(f)(v) = \frac{1}{2}\big(f(v) + f(\kappa v) \big) \quad \text{and} \quad \tilde{S}(f)(v) = \frac{1}{2}\big(f(v) - f(\kappa v) \big).
\end{equation} 

\noindent These are  homomorphisms of $\P(\Gamma_1 \rtimes \z_2^{\kappa})$-modules, and  $\tilde{R}$ corresponds to the Reynolds operator  defined in \cite{manoel2} used to produce a Hilbert basis for the ring of the invariants from a Hilbert basis for the invariants under a subgroup of index 2. Hence, by \cite[Theorem 3.2]{manoel2}, the set $\big \{\tilde{R}(u_i), \tilde{S}(u_i)\tilde{S}(u_j), 1 \leq i, j \leq s \big \}$ forms a Hilbert basis of the ring $\P(\Gamma_1 \rtimes \z_2^{\kappa}).$ Also, the operator $\tilde{S}$ corresponds to the Reynolds operator defined in \cite{manoel1} used in the construction of a generating set for the anti-invariant polynomial functions as a module over the ring of the invariants.  By \cite[Proposition 2.3]{manoel1}, $\tilde{S}$ is an idempotent projection and 
$$\P(\Gamma_1) = \ker \tilde{S} \oplus \mathrm 
{Im} (\tilde{S})$$
is a decomposition in  $\P(\Gamma_1 \rtimes \z_2^{\kappa})$-modules. We then have the following:

%Moreover, $\mathrm {Im} \tilde{R} \subseteq \P(\Gamma_1).$ In fact, given $f \in \P(\Gamma_1),$ $\nu_1 \in \Gamma_1$ and $v \in V,$ we have \begin{align}
% \tilde{R}(f)(\nu_1 v) & = \frac{1}{2}\big(f(\nu_1 v) + f(\kappa \nu_1 v) \big) \nonumber \\
%                     & = \frac{1}{2}\big(f(v) + f(\mu(\kappa) \nu_1 \kappa v) \big) \nonumber \\ & =
%\frac{1}{2}\big(f(v) + f(\kappa v) \big) = \tilde{R}(f)(v).\nonumber
%\end{align}

%In the same way, $\mathrm {Im} \tilde{S} \subseteq \P(\Gamma_1).$ 

\begin{lemma} 
%\label{intermediate}
Let $\tilde{\sigma}$ defined as in (\ref{sigmatilde}). Then
\begin{equation}\label{decomposition}
 \P(\Gamma_1) = \P(\Gamma_1 \rtimes \z_2^{\kappa}) \oplus \Q_{\tilde{\sigma}}(\Gamma_1 \rtimes \z_2^{\kappa})
 \end{equation}
 
\noindent as modules over $\P(\Gamma_1 \rtimes \z_2^{\kappa}).$
\end{lemma}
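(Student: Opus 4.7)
The plan is to exhibit the decomposition explicitly by using the projectors $\tilde R$ and $\tilde S$ defined in (\ref{R\&S}). Given any $f\in\P(\Gamma_1)$, one has the tautological splitting $f = \tilde R(f) + \tilde S(f)$, so the issue reduces to (i) showing that $\tilde R(f)$ lies in $\P(\Gamma_1\rtimes\z_2^{\kappa})$, (ii) showing that $\tilde S(f)$ lies in $\Q_{\tilde\sigma}(\Gamma_1\rtimes\z_2^{\kappa})$, and (iii) checking that the sum is direct. Since $\tilde R$ and $\tilde S$ are $\P(\Gamma_1\rtimes\z_2^{\kappa})$-module homomorphisms (a fact already recorded after (\ref{R\&S})), these three facts yield the desired module decomposition.

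For (i), I first observe that $\tilde R(f)(\kappa v) = \tfrac12(f(\kappa v) + f(v)) = \tilde R(f)(v)$ using $\kappa^2 = e$. To obtain $\Gamma_1$-invariance, the key identity is the semidirect product relation $\kappa\gamma_1 = \mu(\kappa)(\gamma_1)\,\kappa$ (valid as elements acting on $V$), which gives
\[
f(\kappa\gamma_1 v) \;=\; f\bigl(\mu(\kappa)(\gamma_1)\,\kappa v\bigr) \;=\; f(\kappa v)
\]
because $\mu(\kappa)(\gamma_1)\in\Gamma_1$ and $f\in\P(\Gamma_1)$. Combined with $f(\gamma_1 v) = f(v)$, this yields $\tilde R(f)(\gamma_1 v) = \tilde R(f)(v)$, so $\tilde R(f)$ is invariant under every $(\gamma_1,\gamma_2)\in\Gamma_1\rtimes\z_2^{\kappa}$.

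For (ii), the same computation gives $\tilde S(f)(\gamma_1 v) = \tilde S(f)(v)$, and directly $\tilde S(f)(\kappa v) = -\tilde S(f)(v)$. Since $\tilde\sigma(\gamma_1,e) = \sigma_2(e) = 1$ and $\tilde\sigma(e,\kappa) = \sigma_2(\kappa) = -1$, this exactly matches the anti-invariance relation with respect to $\tilde\sigma$, so $\tilde S(f)\in\Q_{\tilde\sigma}(\Gamma_1\rtimes\z_2^{\kappa})$.

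For (iii), suppose $f = g+h$ with $g\in\P(\Gamma_1\rtimes\z_2^{\kappa})$ and $h\in\Q_{\tilde\sigma}(\Gamma_1\rtimes\z_2^{\kappa})$. Then $g(\kappa v) = g(v)$ forces $\tilde R(g) = g$ and $\tilde S(g) = 0$, while $h(\kappa v) = -h(v)$ forces $\tilde R(h) = 0$ and $\tilde S(h) = h$. Applying $\tilde R$ and $\tilde S$ to $f = g+h$ therefore recovers $g = \tilde R(f)$ and $h = \tilde S(f)$ uniquely, which settles both the existence of the decomposition and the triviality of $\P(\Gamma_1\rtimes\z_2^{\kappa})\cap\Q_{\tilde\sigma}(\Gamma_1\rtimes\z_2^{\kappa})$. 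The only subtle point in the whole argument is step (i): one must use the semidirect-product relation to move $\gamma_1$ past $\kappa$ and absorb it into $\Gamma_1$-invariance of $f$; everything else is a direct verification.
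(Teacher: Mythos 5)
Your argument is correct and follows essentially the same route as the paper's: both rest on the splitting induced by the averaging operators $\tilde R$ and $\tilde S$, identifying the invariant summand with the image of $\tilde R$ (equivalently $\ker\tilde S$) and the anti-invariant summand with the image of $\tilde S$. The only substantive addition is your explicit verification, via the relation $\kappa\gamma_1=\mu(\kappa)(\gamma_1)\,\kappa$, that $\tilde R(f)$ and $\tilde S(f)$ remain $\Gamma_1$-invariant; the paper absorbs this point into the cited results of \cite{manoel1} and \cite[Proposition 3.2]{BMZ} rather than checking it directly.
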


\begin{proof}
From (\ref{R&S}) it is immediate that $\ker \tilde{S} = \P(\Gamma_1) \cap \P(\z_2^{\kappa}) = \P(\Gamma_1 \rtimes \z_2^{\kappa}).$ We now show that $\mathrm {Im}(\tilde{S}) = \Q_{\tilde\sigma}(\Gamma_1 \rtimes \z_2^{\kappa}).$ For that, we use \cite[Proposition 3.2]{BMZ} which states that $\Q_{\tilde\sigma}(\Gamma_1 \rtimes \z_2^{\kappa}) = \P(\Gamma_1) \cap \Q_{\sigma_2}(\z_2^{\kappa}).$ As $\mathrm {Im}( \tilde{S}) \subseteq \P(\Gamma_1)$ and
$$ \tilde{S}(f)(\kappa v) = \frac{1}{2}\big(f(\kappa v) - f(\kappa^2 v) \big) = \frac{1}{2}\big(f(\kappa v) - f(v) \big) \nonumber = - \tilde{S}(f)(v),$$ it follows that $\mathrm {Im}(\tilde{S}) \subseteq \Q_{\tilde\sigma}(\Gamma_1 \rtimes \z_2^{\kappa}).$ Now, if $f \in \Q_{\tilde\sigma}(\Gamma_1 \rtimes \z_2^{\kappa}),$
then $$ \tilde{S}(f)(v) = \frac{1}{2}\big(f(v) - f(\kappa v) \big) = \frac{1}{2}\big(f(v) + f(v) \big) = f(v),$$ that is, $\mathrm {Im}(\tilde{S}) = \Q_{\tilde\sigma}(\Gamma_1 \rtimes \z_2^{\kappa}).$ 
\end{proof}

%\begin{align}
% \ker \tilde{S} & = \big \{ f \in \P_{\rho}(\Gamma_1): f(\tau v) = f(v), \forall v \in V \big \} \nonumber \\
 %                    & = \big \{ f \in \P_{\rho}(\Gamma_1): f(\gamma_1 \delta v) = f(v), \forall v \in V \big \} \nonumber \\ & =
%\big \{ f \in \P_{\rho}(\Gamma_1): f(\delta v) = f(v), \forall v \in V \big \} \nonumber \\ & =
%\P_{\rho}(\Gamma_1) \cap \P_{\eta}(\Gamma_2) = \P_{\rho \eta}(\Gamma). \nonumber
%\end{align}

%\begin{corollary}
%Let $\{u_1, \ldots, u_s\}$ be a Hilbert basis for $\P(\Gamma_1).$ Define $\tilde{S}(u_0) \equiv 1.$ Then $\{\tilde{S}(u_i), \ 0 \leq i \le s\}$ generates $\P(\Gamma_1)$ as a module over $\P(\Gamma_1 \rtimes \Gamma_2).$
%\end{corollary}

%\begin{proof} It follows from \cite[Corollary 3.3]{manoel1}. \end{proof}

\quad

From \cite[Corollary 3.3]{manoel1} and decomposition (\ref{decomposition}), if $\{u_1, \ldots, u_s\}$ is a Hilbert basis for $\P(\Gamma_1),$ then $$\{\tilde{S}(u_0) \equiv 1, \tilde{S}(u_1), \ldots, \tilde{S}(u_s)\}$$ is a set of generators of $\P(\Gamma_1)$ as a module over $\P(\Gamma_1 \rtimes \z_2^{\kappa}).$ Consider now $\QQ_{\sigma_1}(\Gamma_1)$ the module of $\Gamma_1-$reversible-equivariants under $\P(\Gamma_1).$ As $\Gamma_2 = \z_2^{\kappa}$ we have a finite number of generators of $\QQ_{\sigma_1}(\Gamma_1)$ over $\P(\Gamma_1  \rtimes \z_2^{\kappa}),$  as shown below:\\

\begin{theorem}
\label{thm:SL}
 Let $\{u_1, \ldots, u_s\}$ be a Hilbert basis for the ring $\P(\Gamma_1)$ and let $\{L_0, \ldots, L_r\}$ be generators of $\QQ_{\sigma_1}(\Gamma_1)$ over $\P(\Gamma_1).$ 
 %(whose existence is guaranteed by \cite[XII Theorem 6.8]{golub}). 
 Then $$\{\tilde{S}(u_i)L_j, \ 0 \leq i \le s, \ 0 \leq j \le r\}$$ generates $\QQ_{\sigma_1}(\Gamma_1)$ over $\P(\Gamma_1 \rtimes \z_2^{\kappa}).$
\end{theorem}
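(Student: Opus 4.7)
The plan is a straightforward two-step substitution, using that the module $\QQ_{\sigma_1}(\Gamma_1)$ is generated over $\P(\Gamma_1)$ by the $L_j$, and that $\P(\Gamma_1)$ is generated over $\P(\Gamma_1 \rtimes \z_2^{\kappa})$ by the $\tilde{S}(u_i)$.

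First I would verify that each proposed generator actually lies in the module we are trying to generate. Since $\tilde{S}$ maps $\P(\Gamma_1)$ into itself, each $\tilde{S}(u_i)$ is a $\Gamma_1$-invariant polynomial function. Because $\QQ_{\sigma_1}(\Gamma_1)$ is a module over $\P(\Gamma_1)$ and $L_j \in \QQ_{\sigma_1}(\Gamma_1)$, the product $\tilde{S}(u_i)L_j$ is in $\QQ_{\sigma_1}(\Gamma_1)$, so the claim at least makes sense as a statement about $\P(\Gamma_1 \rtimes \z_2^{\kappa})$-submodules.

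Next, take an arbitrary $g \in \QQ_{\sigma_1}(\Gamma_1)$. By the hypothesis on $\{L_0,\dots,L_r\}$, there exist $h_0,\dots,h_r \in \P(\Gamma_1)$ such that
\[
g \;=\; \sum_{j=0}^{r} h_j\, L_j.
\]
Now I apply the fact, recorded in the paragraph immediately preceding the theorem and resting on the lemma's decomposition (\ref{decomposition}) together with \cite[Corollary 3.3]{manoel1}, that $\{\tilde{S}(u_0)\equiv 1,\tilde{S}(u_1),\dots,\tilde{S}(u_s)\}$ generates $\P(\Gamma_1)$ as a module over $\P(\Gamma_1\rtimes \z_2^{\kappa})$. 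Hence for each $j$ there exist $p_{ij} \in \P(\Gamma_1 \rtimes \z_2^{\kappa})$ with
\[
h_j \;=\; \sum_{i=0}^{s} p_{ij}\, \tilde{S}(u_i).
\]

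Substituting back yields
\[
g \;=\; \sum_{j=0}^{r}\sum_{i=0}^{s} p_{ij}\, \tilde{S}(u_i)\, L_j,
\]
which is precisely a $\P(\Gamma_1\rtimes \z_2^{\kappa})$-linear combination of the proposed generators. This exhibits $g$ in the required form and completes the argument.

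There is no real obstacle beyond being careful about which ring the coefficients lie in at each step: one must not confuse the intermediate coefficients $h_j \in \P(\Gamma_1)$ (which one cannot use directly, since they need not be $\kappa$-invariant) with the final coefficients $p_{ij} \in \P(\Gamma_1 \rtimes \z_2^{\kappa})$. The clean separation is provided by the decomposition (\ref{decomposition}) together with the role of $\tilde{S}(u_0)=1$ absorbing the summand in $\P(\Gamma_1\rtimes \z_2^{\kappa})$ of each $h_j$.
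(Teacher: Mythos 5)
Your argument is correct and is essentially the paper's own proof: the authors invoke the same generating set $\{\tilde{S}(u_0)\equiv 1,\tilde{S}(u_1),\dots,\tilde{S}(u_s)\}$ for $\P(\Gamma_1)$ over $\P(\Gamma_1\rtimes \z_2^{\kappa})$ and then defer the two-step substitution to \cite[Lemma 3.4]{manoel1}, which is exactly the computation you have written out explicitly.
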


\begin{proof} As we just mentioned above, $\{\tilde{S}(u_0) \equiv 1, \tilde{S}(u_1), \ldots, \tilde{S}(u_s)\}$ is a set of generators for the module $\P(\Gamma_1)$ over $\P(\Gamma_1 \rtimes \z_2^{\kappa}).$ The proof now follows exactly the same steps of \cite[Lemma 3.4]{manoel1}.
\end{proof}

\quad 

We now define the operator $T : \QQ_{\sigma_1}(\Gamma_1)\to \QQ_{\sigma_1}(\Gamma_1)$ by

\begin{equation} \label{eq:T}
T(G) = \frac{1}{2}\left(G - \kappa G \kappa \right).
\end{equation}
We then have:

%$$
%T(G) = \int_{\Gamma_2} \sigma_2(\gamma) \gamma^{-1}G \gamma d\gamma,
%$$

%   From Br\"{o}cker and tom Dieck \cite[Proposition I 5.16]{brocker}, as $\Gamma_2$ admits a reversing symmetry $\delta,$ the mapping $T$ reduces to

%Given $\gamma_1 \in \Gamma_1,$ we have

%\begin{align}
%  T(G)(\gamma_1 v) &= \int_{\Gamma_2} \sigma_2(\gamma) \gamma^{-1}G (\gamma \gamma_1 v) d\gamma \nonumber\\
%                     &= \int_{\Gamma_2} \sigma_2(\gamma) \gamma^{-1}G \big(\mu(\gamma)\gamma_1 \gamma v \big) d\gamma \nonumber\\
%  &= \int_{\Gamma_2} \sigma_2(\gamma) \gamma^{-1} \sigma_1\big(\mu(\gamma)\gamma_1 \big) \mu(\gamma)

%\gamma_1 G (\gamma v) d\gamma \nonumber\\
%   &= \sigma_1(\gamma_1) \int_{\Gamma_2} \sigma_2(\gamma) \gamma^{-1}  \mu(\gamma)\gamma_1 G (\gamma v) d\gamma \nonumber\\
%   &= \sigma_1(\gamma_1) \int_{\Gamma_2} \sigma_2(\gamma) \gamma_1 \gamma^{-1} G (\gamma v) d\gamma \nonumber\\
%   &= \sigma_1(\gamma_1) \gamma_1 \int_{\Gamma_2} \sigma_2(\gamma) \gamma^{-1} G (\gamma v) d\gamma \nonumber\\
%   &= \sigma_1(\gamma_1) \gamma_1 T(G)(v), \nonumber
%   \end{align} for all $v \in V,$ where the second and fifth equalities use (\ref{action}) e the third equality follows because $G \in \QQ(\Gamma_1)$. Therefore $T(G) \in \QQ_{\sigma_1}(\Gamma_1)$

\begin{lemma} \label{lemma}
The mapping $T$ is an homomorphism of modules over the ring $\P(\Gamma_1 \rtimes \z_2^{\kappa}).$ Moreover, $T$ is an idempotent projection with $\mathrm{Im}(T) = \QQ_\sigma(\Gamma_1 \rtimes \z_2^{\kappa}).$
\end{lemma}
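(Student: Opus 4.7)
The plan is to verify the claims in the order they appear: first show $T$ is well-defined (i.e.\ lands in $\QQ_{\sigma_1}(\Gamma_1)$), then establish $\P(\Gamma_1 \rtimes \z_2^{\kappa})$-linearity, then idempotence, and finally identify the image with $\QQ_\sigma(\Gamma_1 \rtimes \z_2^{\kappa})$. Throughout, the key algebraic fact is the semidirect-product commutation relation $\kappa \gamma_1 = \mu(\kappa)(\gamma_1)\, \kappa$ coming from (\ref{action}), combined with the standing assumption that $\sigma_1$ is invariant under $\mu(\kappa)$, i.e.\ $\sigma_1(\mu(\kappa)\gamma_1)=\sigma_1(\gamma_1)$.

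For well-definedness, given $G \in \QQ_{\sigma_1}(\Gamma_1)$, I evaluate $T(G)(\gamma_1 v)$ for $\gamma_1 \in \Gamma_1$, apply the $\Gamma_1$-reversibility of $G$ to the first summand directly and, in the second summand, first rewrite $\kappa\gamma_1 v = \mu(\kappa)(\gamma_1)\,\kappa v$, then apply reversibility of $G$ under $\mu(\kappa)(\gamma_1)$, and finally use $\kappa\mu(\kappa)(\gamma_1)=\gamma_1\kappa$ to pull $\gamma_1$ to the left. The invariance hypothesis on $\sigma_1$ is exactly what lets the scalar $\sigma_1(\gamma_1)$ factor out uniformly, yielding $T(G)(\gamma_1 v)=\sigma_1(\gamma_1)\,\gamma_1\, T(G)(v)$. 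Module linearity over $\P(\Gamma_1 \rtimes \z_2^{\kappa})$ is then immediate from the computation $f(\kappa v)=f(v)$ for every $f$ in that ring, which pulls $f$ out of the defining formula of $T$.

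For idempotence, a short direct calculation using $\kappa^2 = I$ gives $\kappa T(G)(\kappa v) = \tfrac12(\kappa G(\kappa v)-G(v))$, and substituting back yields $T(T(G))=T(G)$. For the image, one inclusion amounts to checking that $T(G)$ is also $\kappa$-reversible-equivariant: a one-line computation shows $T(G)(\kappa v)=-\kappa T(G)(v)$, so that $T(G)$ lies in $\P(\Gamma_1)$-reversibles that are also $\kappa$-reversible, which by the product form (\ref{sigma}) of $\sigma$ is precisely $\QQ_\sigma(\Gamma_1\rtimes\z_2^{\kappa})$. Conversely, if $G$ already satisfies $G(\kappa v)=-\kappa G(v)$, then $\kappa G(\kappa v)=-G(v)$ and the two terms of $T(G)$ coincide, giving $T(G)=G$ and showing surjectivity onto $\QQ_\sigma$.

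The only real subtlety, and the step I would single out as the main obstacle, is the well-definedness argument: it is where the semidirect-product structure genuinely enters, and where one must be careful to invoke the invariance $\sigma_1 \circ \mu(\kappa)=\sigma_1$. Everything else is essentially the same pattern as the scalar operators $\tilde{R}$, $\tilde{S}$ from (\ref{R&S}), adapted to the equivariant setting by conjugating $G$ by $\kappa$ rather than simply evaluating at $\kappa v$.
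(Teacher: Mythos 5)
Your proposal is correct and follows essentially the same route as the paper: module-linearity from $f(\kappa v)=f(v)$, the computation $\kappa T(G)(\kappa v)=-T(G)(v)$ to show $\mathrm{Im}(T)\subseteq\QQ_\sigma(\Gamma_1\rtimes\z_2^{\kappa})$, and $T(G)=G$ for $G$ already in $\QQ_\sigma(\Gamma_1\rtimes\z_2^{\kappa})$ to get the reverse inclusion and idempotence. The only difference is that you explicitly verify well-definedness ($T(G)\in\QQ_{\sigma_1}(\Gamma_1)$) via the relation $\kappa\gamma_1=\mu(\kappa)(\gamma_1)\kappa$ and the standing invariance $\sigma_1\circ\mu(\kappa)=\sigma_1$, a point the paper leaves implicit; this is a welcome extra detail, not a different argument.
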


\begin{proof}
To prove that $T$ is an homomorphism of $\P(\Gamma_1 \rtimes \z_2^{\kappa})$-modules we use that $\z_2^{\kappa}$-action is linear and that $\P(\Gamma_1 \rtimes \z_2^{\kappa}) = \P(\Gamma_1) \cap \P(\z_2^{\kappa})$ (see \cite[Proposition 3.2]{BMZ}). To prove that $\mathrm{Im}(T) = \QQ_\sigma(\Gamma_1 \rtimes \z_2^{\kappa})$ we first prove  that $T(G) \in \QQ_\sigma(\Gamma_1 \rtimes \z_2^{\kappa}),$ for all $G \in \QQ_{\sigma_1}(\Gamma_1).$ But, again by \cite[Proposition 3.2]{BMZ}, $\QQ_\sigma(\Gamma_1 \rtimes \z_2^{\kappa}) = \QQ_{\sigma_1}(\Gamma_1) \cap \QQ_{\sigma_2}(\z_2^{\kappa}),$ so it suffices to show that $T(G)(\kappa v) = - \kappa T(G)(v),$ for all $G \in \QQ_{\sigma_1}(\Gamma_1)$ and $v \in V.$ In fact, 
$$\kappa T(G)(\kappa v) = \kappa \biggr (\frac{1}{2} \bigr(G (\kappa v) - \kappa G (\kappa^2 v) \bigl) \biggl ) = \frac{1}{2} \left(\kappa G (\kappa v) - G (v) \right) = - T(G)(v). 
$$
Therefore, $T(G) \in \QQ_\sigma(\Gamma_1 \rtimes \z_2^{\kappa}).$ Now, let $G \in \QQ_\sigma(\Gamma_1 \rtimes \z_2^{\kappa}) = \QQ_{\sigma_1}(\Gamma_1) \cap \QQ_{\sigma_2}(\z_2^{\kappa}).$ Then  
%$G (\gamma v) = \sigma_2(\gamma) \gamma G(v)$ for all $\gamma \in \Gamma_2$ and, therefore,
$$ T(G)(v) = \frac{1}{2} \bigr(G (v) - \kappa G (\kappa v) \bigl) = \frac{1}{2} \bigr(G (v) + \kappa^2 G (v) \bigl) = G(v).$$
Thus, $\mathrm{Im}(T) = \QQ_\sigma(\Gamma_1 \rtimes \z_2^{\kappa})$ and the restriction of $T$ to $\mathrm{Im}(T)$ is the identity,  implying that $T$ is an idempotent projection.
\end{proof}

\quad 

The following result is now a direct consequence of the last proposition.

\begin{theorem}
\label{thm:GENERATORS_BY_T}
If $\QQ_{\sigma_1}(\Gamma_1)$ is a finitely generated module over $\P(\Gamma_1 \rtimes \z_2^{\kappa})$ with generators $G_1, \ldots , G_n,$ then $\{T(G_i): 1 \leq i \leq n\}$ generates $\QQ_\sigma(\Gamma_1 \rtimes \z_2^{\kappa})$ over $\P(\Gamma_1 \rtimes \z_2^{\kappa}).$
\end{theorem}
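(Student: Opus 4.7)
The plan is to derive this result as a direct corollary of Lemma~\ref{lemma}, exploiting the two key facts established there: that $T$ is a homomorphism of $\P(\Gamma_1 \rtimes \z_2^{\kappa})$-modules, and that $T$ is an idempotent projection onto $\QQ_\sigma(\Gamma_1 \rtimes \z_2^{\kappa})$. There is no genuinely hard step; the work has been done in the preceding lemma, and the theorem is essentially a standard ``apply the projection to the generating set'' argument.

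First, I would take an arbitrary element $H \in \QQ_\sigma(\Gamma_1 \rtimes \z_2^{\kappa})$ and observe that $H$ is also $\Gamma_1$-reversible-equivariant. This uses the identification $\QQ_\sigma(\Gamma_1 \rtimes \z_2^{\kappa}) = \QQ_{\sigma_1}(\Gamma_1) \cap \QQ_{\sigma_2}(\z_2^{\kappa})$ invoked in the proof of Lemma~\ref{lemma} (which cites \cite[Proposition 3.2]{BMZ}), so in particular $H \in \QQ_{\sigma_1}(\Gamma_1)$. Since $G_1, \ldots, G_n$ generate $\QQ_{\sigma_1}(\Gamma_1)$ over $\P(\Gamma_1 \rtimes \z_2^{\kappa})$ by hypothesis, there exist $f_1, \ldots, f_n \in \P(\Gamma_1 \rtimes \z_2^{\kappa})$ with
\[
H \;=\; \sum_{i=1}^{n} f_i\, G_i.
\]

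Next I would apply the transfer operator $T$ to both sides. By the module-homomorphism property from Lemma~\ref{lemma},
\[
T(H) \;=\; \sum_{i=1}^{n} f_i\, T(G_i).
\]
On the other hand, since $H \in \QQ_\sigma(\Gamma_1 \rtimes \z_2^{\kappa}) = \mathrm{Im}(T)$ and $T$ is an idempotent projection, $T$ restricts to the identity on its image, so $T(H) = H$. Combining these two equalities gives
\[
H \;=\; \sum_{i=1}^{n} f_i\, T(G_i),
\]
exhibiting $H$ as a $\P(\Gamma_1 \rtimes \z_2^{\kappa})$-linear combination of $T(G_1), \ldots, T(G_n)$. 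Since each $T(G_i)$ already lies in $\QQ_\sigma(\Gamma_1 \rtimes \z_2^{\kappa})$ by Lemma~\ref{lemma}, this collection is both contained in and generates the target module, completing the proof. The only delicate point worth double-checking is the scalar-linearity of $T$ over the invariant ring $\P(\Gamma_1 \rtimes \z_2^{\kappa})$; this is precisely why the lemma emphasized $\kappa$-invariance of those coefficients, so the step $T(f_i G_i) = f_i T(G_i)$ is valid.
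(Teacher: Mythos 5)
Your proposal is correct and follows essentially the same route as the paper: both rest entirely on Lemma~\ref{lemma} (that $T$ is a $\P(\Gamma_1 \rtimes \z_2^{\kappa})$-module homomorphism and an idempotent projection onto $\QQ_\sigma(\Gamma_1 \rtimes \z_2^{\kappa})$), expand an element in terms of the $G_i$, and push the expansion through $T$. The only cosmetic difference is that the paper picks an arbitrary preimage $G$ of the target element under $T$, whereas you use the element itself as its own preimage via $T(H)=H$; this changes nothing of substance.
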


\begin{proof}
By Lemma~\ref{lemma}, the direct sum decomposition
$$\QQ_{\sigma_1}(\Gamma_1) = \ker T \oplus \QQ_\sigma(\Gamma_1 \rtimes \z_2^{\kappa})$$ of modules over the ring $\P(\Gamma_1 \rtimes \z_2^{\kappa})$ holds. Given $\tilde{G} \in \QQ_\sigma(\Gamma_1 \rtimes \z_2^{\kappa}) = \mathrm{Im}(T),$ there exists $G \in \QQ_{\sigma_1}(\Gamma_1)$ such that $T(G) = \tilde{G}.$ Write $G = \sum_{i = 1}^{n} f_i G_i,$ where $f_i \in \P(\Gamma_1 \rtimes \z_2^{\kappa}).$ Hence,
\[\tilde{G} = T(G) = T\big(\sum_{i = 1}^{n} f_i G_i\big) = \sum_{i = 1}^{n} f_i T \big(G_i\big).\]
\end{proof}

\quad 

We remark that if we are to use Theorem~\ref{thm:GENERATORS_BY_T}  to find generators of $\QQ_\sigma(\Gamma_1 \rtimes \z_2^{\kappa})$ over $\P(\Gamma_1 \rtimes \z_2^{\kappa})$, we need  $\QQ_{\sigma_1}(\Gamma_1)$ to be a finitely generated module over the ring $\P(\Gamma_1 \rtimes \z_2^{\kappa})$. If this holds, then we just project its generators by the operator $T.$ \\

We end this section
presenting the computation of generators of reversible equivariants under a group of type $\Gamma_1 \rtimes \z_2^{\kappa}$ in an algorithmic way: \\

Input: \begin{itemize}
  \item Compact Lie group $\Gamma = \Gamma_1 \rtimes \z_2^{\kappa}$ and epimorphisms $\sigma_1: \Gamma_1 \to \z_2 $ and $\sigma_2:   \z_2^{\kappa}\to \z_2$,  
  where $\kappa$ is a reversing symmetry;
%  \item normal subgroup $\Gamma_1$ of index 2;
  \item Hilbert basis $\{u_1, \ldots, u_s\}$ for $\P(\Gamma_1);$
  \item Generating set $\{L_0, \ldots, L_r\}$ for $\QQ_{\sigma_1}(\Gamma_1)$ over $\P(\Gamma_1);$
\end{itemize}

\quad 

Output: Hilbert basis for $\P(\Gamma)$ and generating set for $\QQ_\sigma(\Gamma)$ over $\P(\Gamma)$ with $\sigma$ defined in \eqref{sigma}.

\quad

Procedure:

\begin{itemize}
\item[1.] Do $\tilde{S}(u_0) = 1;$
\item[2.] For $1 \leq i \leq s$ compute $\tilde{R}(u_i)$ and $\tilde{S}(u_i),$ where $\tilde{R},$ $\tilde{S}$ are given by (\ref{R&S});
\item[3.] The set $\big \{\tilde{R}(u_i), \tilde{S}(u_i)\tilde{S}(u_j), 1 \leq i, j \leq s \big \}$ forms a Hilbert basis of the ring $\P(\Gamma_1 \rtimes \z_2^{\kappa})$ (Using \cite[Theorem 3.2]{manoel2});
%\item the set $\{\tilde{S}(u_i)L_j, \ 0 \leq i \le s, \ 0 \leq j \le r\}$ generates $\PP_{\rho, \rho_{\sigma_1}}(\Gamma_1)$ over $\P_{\rho \eta}(\Gamma);$
\item[4.] The set $\{\tilde{S}(u_i)L_j, \ 0 \leq i \le s, \ 0 \leq j \le r\}$ generates $\QQ_{\sigma_1}(\Gamma_1)$ over $\P(\Gamma_1 \rtimes \z_2^{\kappa})$ (Using Theorem~\ref{thm:SL});
\item[5.] The set $\{T(\tilde{S}(u_i)L_j), \ 0 \leq i \le s, \ 0 \leq j \le r\}$ generates $\QQ_\sigma(\Gamma_1 \rtimes \z_2^{\kappa})$ over $\P(\Gamma_1 \rtimes \z_2^{\kappa}),$ where $T$ is given by (\ref{eq:T}) (Using Theorem \ref{thm:GENERATORS_BY_T}).

\end{itemize}

\section{Computing normal forms} \label{sec:NORMAL FORMS}

In this section we apply the algorithm of the previous section to deduce the normal forms of for types of bireversible vector fields defined on $\R^{2n+2}$. We consider a special type of linearization, under  both  resonance and non-resonance conditions. We consider $\Gamma-$reversible-equivariant systems
\begin{equation}\label{eq:SYSTEM}
  \dot{x} = X(x)
\end{equation}
whose linearization about the origin has matrix $L$ of type (\ref{L})  with nonzero $\omega_i,$ $i=1, \cdots, n.$ The vector fields are reversible equivariant under the group $\z_2 \times \z_2,$  generated by two linear commuting involutions $\phi$ and $\psi$. 

\subsection{Characterization of the involutions} \label{subset:characterization}

In this subsection, we characterize the possible pairs of linear involutions that generate  $\z_2 \times \z_2$ up to the equivalence given by  simultaneous conjugacy: 
 two pairs of linear involutions $(\phi, \psi)$ and $({\tilde{\phi}}, \tilde{\psi})$ on $\R^n$ are said to be {\it equivalent} if there exists a linear diffeomorphism $H$ on $\R^n$ such that 
\begin{equation} \label{eq:equivalence}
\tilde{\phi}= H \circ \phi \circ H^{-1}, \ \ \tilde{\psi}= H \circ \psi\circ H^{-1}.
\end{equation} 
The matricial normal forms of pairs  of involutions $(\phi, \psi)$ up to this equivalence must  anti-commute with  $L$ given by (\ref{L}), that is, 
\[L \phi = - \phi L, \quad \quad L \psi = - \psi L. \] 
Direct computations show that
the matrix of any linear involution $\varphi$ that anti-commutes with $L$  is block diagonal of the form
 \begin{equation}
\label{involution}
\varphi= \mathsf{diag} (\varphi_0, \ldots, \varphi_n),
\end{equation}
where
\begin{equation*}
%\label{pre-NFinvolutions}
\varphi_0=\begin{pmatrix}  a_0 & 0 \\  0  & -a_0 \end{pmatrix}, \quad  \quad \varphi_i=\begin{pmatrix}  a_i & b_i \\  b_i  & -a_i \end{pmatrix},
\end{equation*}
with $a_0^2=1$ and $a_i^2 + b_i^2=1$ for $1 \leq i \leq n$. Hence, each $\varphi_i$ is a reflection of order $2$. Now, if we denote by $\fix(\varphi)$ the {\it fixed-point space} for $\varphi$,
\[ \fix(\varphi)= \left\{ x\in \R^n,~ \varphi(x)=x \right\}, \]
we have $\dim \fix(\varphi_i)=1,$ $0 \leq i \leq n.$ Therefore,  $\dim \fix (\varphi)= n+1.$

Let $(\phi, \psi)$ and $(\tilde{\phi}, \tilde{\psi})$ be two pairs of linear involutions generating an Abelian group. The matricial form of each is of the form  \eqref{involution}. If they are equivalent, then the linear isomorphism $H$ of (\ref{eq:equivalence}) must commute with $L$, since  the linear part of the system must be preserved under equivalence. Therefore $H$ has  block diagonal matrix \begin{equation*}
\mathsf{diag} (H_0, \ldots, H_n),
\end{equation*}
where $H_i$ are invertible matrices of order $2$ giving the equivalence between $(\phi_i, \psi_i)$ and $(\tilde{\phi_i}, \tilde{\psi_i}),$ for $ 0 \leq i \leq n$. It follows that the classification of pairs of commuting involutions on $\R^{2n+2}$  that anti-commute with $L$ is reduced to the classification of pairs of reflections on $\R^2$ whose fixed-point subspaces coincide or are orthogonal straight lines. 
In the first case, $\fix(\phi_i)$ and $\fix (\psi_i)$ coincide and, therefore, $\phi_i=\psi_i$ for $0 \leq i \leq n$. In the second case these are in particular  transversal lines, so from \cite[Teorema 6.2]{mancini} it follows that each pair $(\phi_i,\psi_i)$, $ 0 \leq i \leq n,$ is equivalent to the pair
\begin{equation*}
%\label{NFinvolutions}
\begin{pmatrix} 1 & 0 \\ 0 &-1 \end{pmatrix}, \quad \begin{pmatrix} -1 & 0 \\ 0 &1 \end{pmatrix}.
\end{equation*} 
 %In both cases, by fixing $\phi_i = \begin{pmatrix} 1 & 0\\ 0&-1 \end{pmatrix}$ %for all $0 \leq i \leq n$, the forms for $\psi_i$, $0 \leq i \leq n$, are those %presented in   \eqref{NFinvolutions}.
Therefore, up to equivalence,  there are $2^{n}$ pairs of  involutions $(\phi,\psi)$ 
%(with $\phi \neq \psi$) 
that can generate non-conjugate copies of $\z_2\times \z_2$, namely those with diagonal matricial form 
\begin{equation*} 
\label{phi-psi}
\phi= \mathsf{diag}(1,-1,\ldots,1,-1)\quad \quad 
\psi= \mathsf{diag}(a_0, -a_0, \ldots, a_n, -a_n),
\end{equation*}
%\phi= \begin{pmatrix}
%1 \\
%& -1 & & &  0 \\
%&& \ddots \\
%&0 & & & 1 \\
%&&&&& -1
%\end{pmatrix} \quad \text{and} \quad 
%\psi= \begin{pmatrix}
%a_0 \\
%& -a_0 && & 0 \\
%&& \ddots \\
%& 0&& & a_n \\
%&&&&& -a_n
%\end{pmatrix},
\noindent with $a_i= \pm 1, $  $0 \leq i \leq n$. 

From now on we use complex coordinates of ${\R}^{2n} \simeq {\C}^n$ with the action of $\z_2\times \z_2$ on $\R^2 \times \C^n$ given by
\begin{equation}
\label{phiaction}
\phi(x_1, x_2, z_1, \ldots, z_n)= (x_1, -x_2, \bar{z}_1, \ldots, \bar{z}_n),
\end{equation}
\begin{equation}
\label{psiactiononC}
\psi(x_1, x_2, z_1, \ldots, z_n)= (a_0 x_1, - a_0 x_2, a_1 \bar{z}_1, \ldots, a_n \bar{z}_n).
\end{equation}

\noindent Notice that when $a_i=1,~0 \leq i \leq n$ we have $\phi=\psi$. 

\subsection{Non resonant case}
In this subsection, we compute the normal form of bireversible systems with linear part $L$ given in (\ref{L}) where $\omega_1, \dots, \omega_n$ are algebraically independent. 

It follows from Proposition~\ref{prop:SFORM} that in this case we have $\s = \r \times \T^n,$ where the action of $\s$ is given by (see \cite{BMZ}) 
\begin{equation}
\label{S-actiononR2}
\mathbf{s}(x_1,x_2)= (x_1, sx_1 + x_2)
\end{equation} and \begin{equation*}
\theta (z_1,\ldots, z_n)= (e^{i\theta_1}z_1, \ldots, e^{i\theta_n}z_n),
\end{equation*}
for $s\in \r$ and $ \theta = (\theta_1, \ldots, \theta_n) \in \T^n$.

It is straightforward  that $\PP(\r)$ is generated by
\begin{equation}
\label{basisR-R2}
\left\{ (x_1, x_2), ~( 0 , 1)  \right\},
\end{equation}
over the ring $\P(\r)= \left< x_1 \right>$. By \cite[XII Example 4.1(b)]{golub} and  \cite[Lemma 3.3]{BMZ}, a Hilbert basis for $\P(\s)$ is given by $\{v_1, \dots, v_{n+1}\},$ where
\[v_1(x,z) = x_1, \quad v_2(x,z) = |z_1|^2, \quad \dots, \quad v_{n+1}(x,z) = |z_n|^2 ,\]

\noindent with $x = (x_1,x_2) \in \mathbb{R}^2$ and $z = (z_1, \dots, z_n).$ Again, by \cite[XII Example 5.4(a)]{golub}, (\ref{basisR-R2}) and \cite[Lemma 3.3]{BMZ}, $\PP(\s)$ is generated over the ring $\P(\s)$ by the mappings:\\
$H_0(x,z) = (x_1, x_2, 0,  \dots, 0)$, \quad $H_1(x,z) = (0,1,0, \dots, 0)$, \\
$H_2(x,z) = (0,0, z_1,\dots, 0)$, \quad $H_3(x,z) = (0,0, iz_1,\dots, 0), \cdots$,  \\
$H_{2n}(x,z) = (0, \dots, 0, z_n)$ and $H_{2n+1}(x,z) = (0, \dots, 0, iz_n)$.

Let us now denote by $\z_2^\phi$ and $\z_2^\psi$ each copy of $\z_2$ generated by $\phi$ and $\psi$ respectively. Consider the epimorphisms $\sigma_1: \s \rtimes \z_2^\phi \to \z_2$ and $\sigma_2: \z_2^\psi \to \z_2$ defined respectively as 
\begin{equation}
\label{eq:EPI}
\sigma_1(s,\phi) = - 1 \quad \text{and} \quad \sigma_2(\psi)=-1,
\end{equation}

\noindent for all $s \in \s,$ and we consider $\sigma: (\s \rtimes \z_2^\phi) \rtimes \z_2^\psi \to \z_2$ as defined in \eqref{sigma}. By Theorem \ref{thm:NORMALFORM} we need to find generators of $\QQ_\sigma(\s \rtimes (\z_2^\phi \times \z_2^\psi))$. For that,  we use the algorithm given in Section~\ref{sec:SEMIDIRECT}:

\begin{itemize}
  \item[1.] We start by computing a Hilbert basis for $\P(\s \rtimes \z_2^{\phi}):$ we consider the operators $\tilde{R}, \tilde{S}: \P(\s) \to \P(\s)$ as in (\ref{R&S}) for $\kappa = \phi$. We have that $v_i \in \P(\z_2^{\phi}),$  $1 \leq i \leq n+1.$ Thus $\tilde{R}(v_i) = v_i$ and $\tilde{S}(v_i) = 0,$  $1 \leq i \leq n+1.$ By \cite[Theorem 3.2]{manoel2}, we have
\begin{equation}\label{basis1}
  \P(\s \rtimes \z_2^{\phi}) = \P(\s) = \langle v_1, \dots, v_{n+1} \rangle.
\end{equation}

  \item[2.] Consider $\check{S} : \P(\s \rtimes \z_2^\phi) \to \P(\s \rtimes \z_2^\phi)$ as in (\ref{R&S}) with $\kappa = \psi$ given by (\ref{psiactiononC}). For $i \geq 2,$ each $v_i$ is invariant under $\psi.$ Therefore, $\check{S}(v_i) = 0,$ for all $i \geq 2.$ Moreover, $\check{S}(v_1) = (1-a_0)v_1/2.$ 
  
%From \cite[Corollary 3.3]{manoel1}, $\{\check{S}(v_0) \equiv 1, \check{S}(v_1)\}$ is a generating set of the module 
%$\P(\s \rtimes \z_2^\phi)$ over $\P(\s \rtimes (\z_2^\phi \times \z_2^\psi)).$

%No item abaixo o grupo de simetrias é: where the group of symmetries is now $(\s \rtimes \z_2^\phi)_{+} = \s.$ 

\item[3.] To obtain the generators of $\QQ_{\sigma_1}(\s \rtimes \z_2^\phi)$ over the ring $\P(\s \rtimes \z_2^\phi),$ we first  find generators of $\QQ_{\sigma_1}(\s \times Id)$ over $\P(\s \rtimes \z_2^\phi)$ and then project them by $T : \QQ_{\sigma_1}(\s \times Id) \to \QQ_{\sigma_1}(\s \times Id)$ defined in (\ref{eq:T}), with $\kappa = \phi$ (Theorem \ref{thm:GENERATORS_BY_T}).
  
The group $\s$ has only symmetries, so $\QQ_{\sigma_1}(\s \times Id) = \PP(\s)$ and since $\P(\s) = \P(\s \rtimes \z_2^\phi)$ the generators of $\QQ_{\sigma_1}(\s \times Id)$ over $\P(\s \rtimes \z_2^\phi)$ are $H_0, \dots, H_{2n+1}.$ For $j$ even, $H_j$ is equivariant under $\phi$ and, in this case, $T(H_j) = 0.$ For $j$ odd, $H_j$ is reversible under $\phi$ and, in this case, $T(H_j) = H_j.$ By Theorem~\ref{thm:GENERATORS_BY_T}, 
\[L_0 \equiv H_1, L_1 \equiv H_3, \dots, L_n \equiv H_{2n+1} \] generate $\QQ_{\sigma_1}(\s \rtimes \z_2^\phi)$ over the ring $\P(\s \rtimes \z_2^\phi).$

%We have $\P(\s)  = \langle v_1, \dots, v_{n+1} \rangle$ and $\PP(\s) = \P(\s) \{H_0, \dots, H_{2n+1}\}.$ By item 1. $\tilde{S}(v_i) = 0,$ for all $1 \leq i \leq n+1.$ Define $\tilde{S}(v_0) = 1$ and $T : \QQ(\s)\to \QQ(\s)$ as in (\ref{eq:T}) with $\delta = \phi.$  

\item[4.] Define $\check{T} : \QQ_{\sigma_1}(\s \rtimes \z_2^\phi) \to \QQ_{\sigma_1}(\s \rtimes \z_2^\phi)$  as  in (\ref{eq:T}) with $\kappa = \psi$ given by (\ref{psiactiononC}). Again by Theorem~\ref{thm:GENERATORS_BY_T}, to obtain the generators of $\QQ_\sigma(\s \rtimes (\z_2^\phi \times \z_2^\psi))$ over the ring $\P(\s \rtimes (\z_2^\phi \times \z_2^\psi)),$ we first find the generators of $\QQ_{\sigma_1}(\s \rtimes \z_2^\phi)$ over $\P(\s \rtimes (\z_2^\phi \times \z_2^\psi))$ and then project by $\check{T}.$ 

%By item 4. $\{L_0, L_1, \dots, L_n\}$ generates $\QQ(\s \rtimes \z_2^\phi)$ over the ring $\P(\s \rtimes \z_2^\phi).$
    
Consider $\check{S}$ as in the item 2 above and define $\check{S}(v_0) \equiv 1.$ By Theorem~\ref{thm:SL}, the set $\{\check{S}(v_i)L_j, \ i = 0, 1, \ 0 \leq j \le n\}$ generates the module $\QQ_{\sigma_1}(\s \rtimes \z_2^\phi)$ over $\P(\s \rtimes (\z_2^\phi \times \z_2^\psi)).$ Thus, we compute $\check{L}_{0j} = \check{T}(\check{S}(v_0)L_j) = \check{T}(L_j)$ and $\check{L}_{1j} = \check{T}(\check{S}(v_1)L_j)$ for $0 \leq j \leq n.$ It is direct that  $\check{L}_{0j} = L_j$ for $j \geq 1.$ Moreover, eliminating constants, we have
\[\check{L}_{00} = (1 + a_0)L_0, \qquad \check{L}_{10} = (1 - a_0)v_1L_0\] and $\check{L}_{1j} \equiv 0$ for $j \geq 1.$
  
When $a_0 = 1$ the set $\{L_0, \ldots, L_n\}$ generates $\QQ_\sigma(\s \rtimes (\z_2^\phi \times \z_2^\psi))$ over the ring $\P(\s \rtimes (\z_2^\phi \times \z_2^\psi))$ and when $a_0 = -1$ the set $\{v_1L_0, L_1, \ldots, L_n\}$ generates $\QQ_\sigma(\s \rtimes (\z_2^\phi \times \z_2^\psi))$ over $\P(\s \rtimes (\z_2^\phi \times \z_2^\psi)).$
\end{itemize}

It remains to find a Hilbert basis for $\P(\s \rtimes (\z_2^\phi \times \z_2^\psi)).$ For this we take the
Hilbert basis $\{v_1, \ldots v_{n+1}\}$ for $\P(\s \rtimes \z_2^\phi)$ given in (\ref{basis1}) and apply \cite[Theorem 3.2]{manoel2}, by considering the operators $\check{R}, \check{S} : \P(\s \rtimes \z_2^\phi) \to \P(\s \rtimes \z_2^\phi)$ as defined in (\ref{R&S}) with $\kappa = \psi$ ($\check{S}$ is the operator defined in  item 2 above). Then, eliminating constants, we have
\[\check{R}(v_1) = (1 + a_0)v_1 \qquad \text{e} \qquad \check{S}(v_1) = (1-a_0)v_1.\] Moreover, $\check{R}(v_i) = v_i$ and $\check{S}(v_i) = 0$ for $i \geq 2.$ Thus, for $a_0 = 1$ we have

$$\P(\s \rtimes (\z_2^\phi \times \z_2^\psi)) = \langle v_1, \ldots , v_{n+1} \rangle,$$ and for $a_0 = -1$
$$\P(\s \rtimes (\z_2^\phi \times \z_2^\psi)) = \langle v_1^2, v_2, \ldots , v_{n+1} \rangle .$$

Therefore, if $a_0=1$ then $\phi=\psi$ and, in this case, we obtain the $\z_2^\phi-$reversible normal form
\begin{small} 
\begin{align}
\label{NF-Z2}
\dot{x}_1 &= x_2 \nonumber \\
\dot{x}_2 &= f_0(x_1, |z_1|^2, \ldots, |z_n|^n )\nonumber\\
\dot{z}_1 &= -i\omega_1z_1 + iz_1f_1(x_1, |z_1|^2, \ldots, |z_n|^n )\\
\dot{z}_2 &= -i\omega_2z_2 + iz_2f_2(x_1, |z_1|^2, \ldots, |z_n|^n )\nonumber \\
\vdots &  \nonumber \\
\dot{z}_n &= -i\omega_2z_n + iz_nf_n(x_1, |z_1|^2, \ldots, |z_n|^n ), \nonumber
\end{align}
\end{small}
for  $f_i: \R^{n+1},0 \to \R$, $0 \leq i \leq n$. If $a_0=-1$, then the $\z_2^\phi\times\z_2^\psi-$reversible-equivariant normal form is
\begin{small}
\begin{align*}
\dot{x}_1 &= x_2 \nonumber\\
\dot{x}_2 &= x_1f_0(x_1^2, |z_1|^2, \ldots, |z_n|^n )\nonumber\\
\dot{z}_1 &= -i\omega_1z_1 + iz_1f_1(x_1^2, |z_1|^2, \ldots, |z_n|^n )\\
\dot{z}_2 &= -i\omega_2z_2 + iz_2f_2(x_1^2, |z_1|^2, \ldots, |z_n|^n )\nonumber \\
\vdots &  \nonumber \\
\dot{z}_n &= -i\omega_2z_n + iz_nf_n(x_1^2, |z_1|^2, \ldots, |z_n|^n ),\nonumber
\end{align*}
\end{small}
for  $f_i: \R^{n+1},0 \to \R$, $0 \leq i \leq n$.

\begin{remark} {\rm The normal forms obtained above depend only on $a_0.$ This is due to the fact that each $v_i,$ $2 \leq i \leq n+1,$ is $\psi-$invariant in the $z-$coordinate. We also remark that the normal form \eqref{NF-Z2} has been obtained by Lima and Teixeira in \cite{lima}. The authors use the classical method developed by Belitskii in the context without symmetries to get a pre-normal form and impose the reversibility of $\phi$ {\it a posteriori}. In our approach, the procedure takes the reversibility into consideration from the beginning, which simplifies the process of annihilating terms up to equivalence.}
\end{remark}

\subsection{Resonance of type $(n_1: n_2: 0)$ in $\R^2 \times \C^3$} 

The actions of $\z_2^\phi$ and $\z_2^\psi$ on $\R^2 \times \C^3$ is given by \eqref{phiaction} and \eqref{psiactiononC} for $n=3$. We assume that $n_1 \omega_2 - n_2 \omega_1 = 0,$ with $n_1, n_2 \in \mathbb{N}$ nonzero, and $\omega_3$ is algebraically independent with respect to $\omega_1$ and $\omega_2.$ In this case, the system (\ref{eq:SYSTEM}) is called $(n_1:n_2:0)-$resonant. 

By Proposition~\ref{prop:SFORM}, $\s = \r \times \T^2.$ The diagonal action of $\s$ on $\R^2 \times \C^3$ is given from the standard action of $\r$ on $\R^2$ as in (\ref{S-actiononR2}) and from the action of $\T^2$ on $\C^3$ given by
\begin{equation*}
(\theta_1, \theta_2) (z_1,z_2,z_3)= (e^{in_1\theta_1}z_1, e^{in_2\theta_1}z_2, e^{i\theta_2}z_3), \quad (\theta_1, \theta_2) \in \T^2.
\end{equation*}

By \cite[XII Example 5.4(a)]{golub}, \cite[XIX Theeorem 4.2)]{golub} and  \cite[Lemma 3.3]{BMZ}, a Hilbert basis for $\P(\s)$ is given by $\{v_1, \dots, v_6\},$ where
\[v_1(x,z) = x_1, \ \  v_2(x,z) = |z_1|^2, \ \ v_3(x,z) = |z_2|^2, \ \ v_4(x,z)= \re(z_1^{n_2}\bar{z}_2^{n_1}),\] \[v_5(x,z)=\im(z_1^{n_2}\bar{z}_2^{n_1}), \ \ v_6(x,z) = |z_3|^2,\] 

\noindent with $x = (x_1,x_2) \in \mathbb{R}^2$ and $z = (z_1, z_2, z_3) \in \C^3.$ Moreover, $\PP(\s)$ is gene\-rated over $\P(\s)$ by the mappings \[H_0(x,z) = (x_1, x_2, 0, 0, 0), \ \ H_1(x,z) = (0,1,0, 0, 0),\] \[H_2(x,z) = (0,0, z_1, 0, 0), \ \ H_3(x,z) = (0,0, iz_1, 0, 0), \] \[ H_4(x,z) = (0, 0, \bar{z_1}^{n_2 - 1}z_2^{n_1}, 0,0), \ \ H_5(x,z) = (0, 0, i\bar{z_1}^{n_2 - 1}z_2^{n_1}, 0,0),\]
\[H_6(x,z) = (0,0, 0, z_2,0), \ \ H_7(x,z) = (0,0, 0, iz_2, 0), \] \[ H_8(x,z) = (0, 0, 0, z_1^{n_2}\bar{z}_2^{n_1 - 1}, 0), \ \ H_9(x,z) = (0, 0, 0, i z_1^{n_2}\bar{z}_2^{n_1 - 1}, 0),\]
\[H_{10}(x,z) = (0,0, 0, 0, z_3), \ \ H_{11}(x,z) = (0,0, 0, 0, iz_3).\]

Next we consider the epimorphisms $\sigma_1$ and $\sigma_2$ given in (\ref{eq:EPI}), and $\sigma$ as defined in \eqref{sigma}. We determine now a set of generators for $\QQ_\sigma(\s \rtimes (\z_2^\phi \times \z_2^\psi))$ over $\P(\s \rtimes (\z_2^\phi \times \z_2^\psi))$ as follows:

\begin{itemize}
  \item[1.] We start getting a Hilbert basis for $\P(\s \rtimes \z_2^{\phi}).$ Again we consider the operators $\tilde{R}, \tilde{S}: \P(\s) \to \P(\s)$ defined in (\ref{R&S}) with $\kappa = \phi$. 
% $v_i \in \P(\z_2^{\phi}),$ for all $i \neq 5$ and $v_5 \in \Q(\z_2^{\phi}).$ 
We have $\tilde{R}(v_i) = v_i$ and $\tilde{S}(v_i) = 0,$ for all $i \neq 5,$ and $\tilde{R}(v_5) = 0,$ $\tilde{S}(v_5) = v_5.$ By \cite[Theorem 3.2]{manoel2}, $\P(\s \rtimes \z_2^{\phi}) = \langle v_1, \dots, v_4, v_5^2, v_6 \rangle.$ Since $v_5^2 = v_2^{n_2}v_3^{n_1} - v_4^2$, we have \begin{equation*}
  \P(\s \rtimes \z_2^{\phi}) = \langle u_1, \dots, u_5 \rangle,
\end{equation*}
where $u_i = v_i$ for $i = 1, \ldots, 4$ and $u_5 = v_6.$

\item[2.] Define $\check{S} : \P(\s \rtimes \z_2^\phi) \to \P(\s \rtimes \z_2^\phi)$ as in (\ref{R&S}) with $\kappa = \psi$ given in (\ref{psiactiononC}). We have  $\check{S}(u_i) = 0,$ for $i = 2, 3, 5,$ $\check{S}(u_1) = (1-a_0) u_1$ and $\check{S}(u_4) = (1-a_1^{n_2}a_2^{n_1})u_4.$

\item[3.] Here we determine the generators for $\QQ_{\sigma_1}(\s \rtimes \z_2^\phi)$ over the ring $\P(\s \rtimes \z_2^\phi).$ As in the last subsection, we first obtain the generators of $\PP(\s)$ over $\P(\s \rtimes \z_2^\phi).$ 
%where the group of symmetries is now $(\s \rtimes \z_2^\phi)_{+} = \s.$

Since $\s$ has only symmetries, then $\QQ_{\sigma_1}(\s \times Id) = \PP(\s).$  Thus $\{H_0, \ldots, H_{11}\}$ generates $\QQ_{\sigma_1}(\s \times Id)$ over $\P(\s) = \langle v_1, \ldots, v_6 \rangle.$ We now consider $\tilde{S}$ defined in step 1 above. From \cite[Corollary 3.3]{manoel1}, $\bigl \{\tilde{S}(v_0) \equiv 1, \tilde{S}(v_5) \equiv v_5 \bigr\}$ is a set of generators for the module $\P(\s)$ over $\P(\s \rtimes \z_2^\phi).$ So 
$$\bigl \{H_j, \ v_5H_j: \ 0 \leq j \leq 11 \bigr \}$$ is a set of generators for $\QQ_{\sigma_1}(\s \times Id)$ over $\P(\s \rtimes \z_2^\phi).$ 

By Theorem~\ref{thm:GENERATORS_BY_T}, it  remains to project such generators by the mapping $T : \QQ_{\sigma_1}(\s \times Id)\to \QQ_{\sigma_1}(\s \times Id)$ defined in (\ref{eq:T}) for $\kappa = \phi$ acting as in (\ref{phiaction}). For $j$ even, $H_j$ is equivariant under $\phi$ and $v_5H_j$ is reversible under $\phi.$ In this case, $T(H_j) \equiv 0$ and $T(v_5H_j) \equiv v_5H_j.$ For $j$ odd, $H_j$ is reversible  under $\phi$ and $v_5H_j$ is equivariant under $\phi.$ In this case, $T(H_j) \equiv H_j$ and $T(v_5H_j) \equiv 0.$ Therefore, the elements 
\[L_0 \equiv H_1, \ L_1 \equiv v_5H_0, \ L_2 \equiv H_3, \ L_3 \equiv H_5, \ L_4 \equiv v_5H_2, \ L_5 \equiv v_5H_4,\] \[L_6 \equiv H_7, \ L_7 \equiv H_9, \ L_8 \equiv v_5H_6, \ L_9 \equiv v_5H_8, \ L_{10} \equiv H_{11}, \ L_{11} \equiv v_5H_{10}\] generate $\QQ_{\sigma_1}(\s \rtimes \z_2^\phi)$ over the ring $\P(\s \rtimes \z_2^\phi).$

%We have $\P(\s)  = \langle v_1, \dots, v_{n+1} \rangle$ and $\PP(\s) = \P(\s) \{H_0, \dots, H_{2n+1}\}.$ By item 1. $\tilde{S}(v_i) = 0,$ for all $1 \leq i \leq n+1.$ Define $\tilde{S}(v_0) = 1$ and $T : \QQ(\s)\to \QQ(\s)$ as in (\ref{eq:T}) with $\kappa = \phi.$  

 \item[4.] Consider now $\check{S}$ as in step 2 above and define $\check{T}: \QQ_{\sigma_1}(\s \rtimes \z_2^\phi) \to \QQ_{\sigma_1}(\s \rtimes \z_2^\phi)$ whose law is the same as of $T$ in (\ref{eq:T}) with $\kappa = \psi$ acting as in (\ref{psiactiononC}). Define $\check{S}(u_0) \equiv 1.$ By Theorem~\ref{thm:GENERATORS_BY_T}, the generators of $\QQ_\sigma(\s \rtimes (\z_2^\phi \times \z_2^\psi))$ over $\P(\s \rtimes (\z_2^\phi \times \z_2^\psi))$ are given by $\check{L}_{ij} = \check{T}(\check{S}(u_i)L_j)$ for $i=0,1,4$ and $0 \leq j \leq 11.$ 
 
Eliminating constants after projection, direct computations give
\[\check{L}_{00} \equiv (1 + a_0)L_0, \quad \check{L}_{0l} \equiv (1 + a_1^{n_2}a_2^{n_1})L_l, \quad \check{L}_{0k} \equiv L_k,\] \[\check{L}_{10} \equiv (1 - a_0)u_1L_0, \quad \check{L}_{1l} \equiv (1 + a_0a_1^{n_2}a_2^{n_1})(1 - a_0)u_1L_l, \quad \check{L}_{1k} \equiv 0,\] \[\check{L}_{40} \equiv (1 - a_0)(1 - a_1^{n_2}a_2^{n_1})u_4L_0, \quad \check{L}_{4l} \equiv (1 - a_1^{n_2}a_2^{n_1})u_4L_l, \quad \check{L}_{4k} \equiv 0,\] 

\noindent for $l = 1,3,4,7,8,11$ e $k = 2,5,6,9,10.$

\end{itemize}

%\vspace{-\topsep}
%\begin{itemize}
%	\setlength{\parskip}{2pt}
%	\setlength{\itemsep}{0pt plus 2pt}
%	\item 
%\end{itemize}
%\vspace{-\topsep}

\quad

We have four cases to consider (see Table \ref{table-R2C3}). In each case, we present a Hilbert basis for $\P(\s \rtimes (\z_2^\phi \times \z_2^\psi))$ by using \cite[Theorem 3.2]{manoel2} with $R, S: \P(\s \rtimes \z_2^\phi) \to \P(\s \rtimes \z_2^\phi)$ as in (\ref{R&S}) and $\kappa = \psi.$

\begin{itemize}
\item[Type A:] When $a_0 = 1$ and $a_1^{n_2}a_2^{n_1} = 1,$ a set of generators for $\QQ_\sigma(\s \rtimes (\z_2^\phi \times \z_2^\psi))$ over $\P(\s \rtimes (\z_2^\phi \times \z_2^\psi))$ is 
$$\{L_0, \ \ldots, \ L_{11}\},$$ where $\P(\s \rtimes (\z_2^\phi \times \z_2^\psi)) = \langle u_1, u_2, u_3, u_4, u_5 \rangle.$
\item[Type B:] When $a_0 = 1$ and $a_1^{n_2}a_2^{n_1} = -1,$ a set of generators for $\QQ_\sigma(\s \rtimes (\z_2^\phi \times \z_2^\psi))$ over $\P(\s \rtimes (\z_2^\phi \times \z_2^\psi))$ is 
$$\{L_0, \ u_4L_1, \ L_2, \ u_4L_3, \ u_4L_4, \ L_5, \ L_6, \ u_4L_7, \ u_4L_8, \ L_9, \ L_{10}, \ u_4L_{11}\},$$ where $\P(\s \rtimes (\z_2^\phi \times \z_2^\psi)) = \langle u_1, u_2, u_3, u_4^2, u_5 \rangle.$
\item[Type C:] When $a_0 = -1$ and $a_1^{n_2}a_2^{n_1} = 1,$ a set of generators for $\QQ_\sigma(\s \rtimes (\z_2^\phi \times \z_2^\psi))$ over $\P(\s \rtimes (\z_2^\phi \times \z_2^\psi))$ is 
$$\{u_1L_0, \ L_1, \ \ldots, \ L_{11}\},$$ where $\P(\s \rtimes (\z_2^\phi \times \z_2^\psi)) = \langle u_1^2, u_2, u_3, u_4, u_5 \rangle.$
\item[Type D:] When $a_0 = -1$ and $a_1^{n_2}a_2^{n_1} = -1,$ a set of generators for $\QQ_\sigma(\s \rtimes (\z_2^\phi \times \z_2^\psi))$ over $\P(\s \rtimes (\z_2^\phi \times \z_2^\psi))$ is 
$$\{u_1L_0, \ u_4L_0, \ u_1L_1, \ u_4L_1, \ L_2, \ u_1L_3, \ u_4L_3,\ u_1L_4, \ u_4L_4,$$ $$L_5, \ L_6, \ u_1L_7, \ u_4L_7, \ u_1L_8, \ u_4L_8,\ L_9, \ L_{10}, \ u_1L_{11},\ u_4L_{11}\},$$ where $\P(\s \rtimes (\z_2^\phi \times \z_2^\psi)) = \langle u_1^2, u_2, u_3, u_4^2, u_1u_4, u_5 \rangle.$
\end{itemize}

\begin{small}
\begin{table}[h]
	\centering
	\begin{tabular}{|l|c|c|}
		\hline
		Type & $\QQ_\sigma(\s \rtimes (\z_2^\phi \times \z_2^\psi))$ & $\P(\s \rtimes (\z_2^\phi \times \z_2^\psi))$\\
		\hline
		 A &   $L_j$, $~0 \leq j \leq 11$                     &   $u_1, u_2, u_3, u_4,u_5$\\
		\hline
		 B  &   $L_k$, $~u_4 L_\ell~$                            &  $u_1,u_2,u_3,u_4^2,u_5$ \\
		&   for $k=0,2,5,6,9,10 ~$ and $~\ell=1,3,4,7,8,11$   &                           \\
		\hline
		 C  &  $u_1H_0$, $~H_k~$ for $1 \leq k \leq 11$      & $u_1^2, u_2,u_3,u_4,u_5$\\
		\hline
		 D  &  $L_k$, $~u_1L_l$,  $~u_4 L_\ell~$                 & $u_1^2, u_2,u_3,u_4^2,u_1u_4,u_5$    \\
		&  for $~k=2,5,6,9,10~$ and $~\ell=0,1,3,4,7,8,11$ &                                          \\
		\hline
	\end{tabular}
\label{table-R2C3}
\vspace{0.2cm}
\caption{Generators on $\R^2 \times \C^3$ (see steps 1 and 3 of the algorithm in this subsection for the notation used in this table).}
\end{table}
\end{small}

Therefore, we have:

\begin{theorem}  Let $\dot{x}= X(x)$ be a {\rm $\z_2^\phi \times \z_2^\psi-$}reversible-equivariant system, with $L=(dX)_0$ defined as \eqref{L} for $n=3$ and $(n_1:n_2:0)-$resonant. Then this system is formally conjugate to one of the following:\\
	
\noindent Type A:
\begin{footnotesize}
\begin{align}
\dot{x}_1 &= x_2  +  x_1\im(z_1^{n_2}\bar{z}_2^{n_1})f_{0}(X),\nonumber\\
\dot{x}_2 &= f_{1}(X)   +    x_2\im(z_1^{n_2}\bar{z}_2^{n_1})f_{0}(X),\nonumber\\
\dot{z}_1 &= -i\omega_1z_1  +  iz_1f_{2}(X)  +   i\bar{z}_1^{n_2-1}z_2^{n_2}f_{3}(X)  +  z_1\im(z_1^{n_2}\bar{z}_2^{n_1})f_{4}(X)  + \bar{z}_1^{n_2-1}z_2^{n_2}\im(z_1^{n_2}\bar{z}_2^{n_1})f_{5}(X),\nonumber\\
\dot{z}_2 &= -i\omega_2z_2  +  iz_2f_{6}(X)   +  iz_1^{n_2}\bar{z}_2^{n_1-1}f_{7}(X)  +  z_2\im(z_1^{n_2}\bar{z}_2^{n_1})f_{8}(X)  + z_1^{n_2}\bar{z}_2^{n_1-1}\im(z_1^{n_2}\bar{z}_2^{n_1})f_{9}(X),  \nonumber\\
\dot{z}_3&= -i\omega_3z_3  +  i z_3f_{10}(X) +  z_3\im(z_1^{n_2}\bar{z}_2^{n_1}) f_{11}(X), \nonumber
\end{align}
\end{footnotesize} 
\noindent for  $f_i: \R^5,0 \to \R$, $i= 0, \ldots, 11$, and $X=(x_1$, $|z_1|^2$, $|z_2|^2$, $\re(z_1^{n_2}\bar{z}_2^{n_1})$,$|z_3|^2) $.\\

\noindent Type B:
\begin{footnotesize}
\begin{align}
\dot{x}_1 &= x_2 ~ +~  x_1 \re(z_1^{n_2}\bar{z}_2^{n_1}) \im(z_1^{n_2}\bar{z}_2^{n_1})f_{0}(X),\nonumber\\
\dot{x}_2 &= f_{1}(X)  ~ + ~   x_2 \re(z_1^{n_2}\bar{z}_2^{n_1}) \im(z_1^{n_2}\bar{z}_2^{n_1})f_{0}(X),\nonumber\\
\dot{z}_1 &= -i\omega_1z_1 ~ + ~ iz_1f_{2}(X)  + \bar{z}_1^{n_2-1}z_2^{n_2}\im(z_1^{n_2}\bar{z}_2^{n_1})f_{3}(X) ~+~  i\bar{z}_1^{n_2-1}z_2^{n_2}\re(z_1^{n_2}\bar{z}_2^{n_1})f_{4}(X) ~  + \nonumber,\\
&+ ~z_1\re(z_1^{n_2}\bar{z}_2^{n_1})\im(z_1^{n_2}\bar{z}_2^{n_1})f_{5}(X) \nonumber,\\
\dot{z}_2 &= -i\omega_2z_2 ~ + ~ iz_2f_{6}(X) ~ +~ z_1^{n_2}\bar{z}_2^{n_1-1}\im(z_1^{n_2}\bar{z}_2^{n_1})f_{7}(X) ~+~  i z_1^{n_2}\bar{z}_2^{n_1-1} \re(z_1^{n_2}\bar{z}_2^{n_1})f_{8}(X) ~ + ,\nonumber\\
&+ ~z_2\re(z_1^{n_2}\bar{z}_2^{n_1})\im(z_1^{n_2}\bar{z}_2^{n_1})f_{9}(X),  \nonumber\\
\dot{z}_3&= -i\omega_3z_3 ~+~ i z_3f_{10}(X) ~+~    z_3\re(z_1^{n_2}\bar{z}_2^{n_1})\im(z_1^{n_2}\bar{z}_2^{n_1}) f_{11}(X),    \nonumber
\end{align}
\end{footnotesize}
\noindent for  $f_i: \R^{5},0 \to \R$, $i= 0, \ldots, 11$, and  $X=(x_1$, $|z_1|^2$, $|z_2|^2$,$\re^2(z_1^{n_2}\bar{z}_2^{n_1})$, $|z_3|^2)$.\\

\noindent Type C:
\begin{footnotesize}
\begin{align}
\dot{x}_1 &= x_2  +  x_1\im(z_1^{n_2}\bar{z}_2^{n_1})f_{0}(X),\nonumber\\
\dot{x}_2 &= x_1f_{1}(X)   +    x_2\im(z_1^{n_2}\bar{z}_2^{n_1})f_{0}(X),\nonumber\\
\dot{z}_1 &= -i\omega_1z_1  +  iz_1f_{2}(X)  +   i\bar{z}_1^{n_2-1}z_2^{n_2}f_{3}(X)  +  z_1\im(z_1^{n_2}\bar{z}_2^{n_1})f_{4}(X)  + \bar{z}_1^{n_2-1}z_2^{n_2}\im(z_1^{n_2}\bar{z}_2^{n_1})f_{5}(X),\nonumber\\
\dot{z}_2 &= -i\omega_2z_2  +  iz_2f_{6}(X)   +  iz_1^{n_2}\bar{z}_2^{n_1-1}f_{7}(X)  +  z_2\im(z_1^{n_2}\bar{z}_2^{n_1})f_{8}(X)  + z_1^{n_2}\bar{z}_2^{n_1-1}\im(z_1^{n_2}\bar{z}_2^{n_1})f_{9}(X)  \nonumber\\
\dot{z}_3&= -i\omega_3z_3   +    iz_3f_{10}(X)   +   z_3\im(z_1^{n_2}\bar{z}_2^{n_1}) f_{11}(X),\nonumber
\end{align}
\end{footnotesize}
\noindent for  $f_i: \R^{5},0 \to \R$, $i= 0, \ldots, 11$, and $X=(x_1^2$, $|z_1|^2$, $|z_2|^2$, $\re(z_1^{n_2}\bar{z}_2^{n_1})$, $|z_3|^2)$.\\

\noindent Type D:
\begin{footnotesize}
\begin{align}
\dot{x}_1 &= x_2  +  x_1^2\im(z_1^{n_2}\bar{z}_2^{n_1})f_{0}(X)  +   x_1 \re(z_1^{n_2}\bar{z}_2^{n_1})\im(z_1^{n_2}\bar{z}_2^{n_1}) f_{1}(X), \nonumber\\
\dot{x}_2 &= x_1f_{2}(X)   +    x_1 x_2\im(z_1^{n_2}\bar{z}_2^{n_1})f_{0}(X) +  \re(z_1^{n_2}\bar{z}_2^{n_1})f_{3}(X)  +    x_2\re(z_1^{n_2}\bar{z}_2^{n_1}))\im(z_1^{n_2}\bar{z}_2^{n_1})f_{1}(X),     \nonumber\\
\dot{z}_1 &= -i\omega_1z_1  +  iz_1f_{4}(X)  + \bar{z}_1^{n_2-1}z_2^{n_2}\im(z_1^{n_2}\bar{z}_2^{n_1})f_{5}(X) +   ix_1\bar{z}_1^{n_2-1}z_2^{n_2}f_{6}(X) + \nonumber \\
&  +  x_1 z_1\im(z_1^{n_2}\bar{z}_2^{n_1})f_{7}(X)+ i\bar{z}_1^{n_2-1}z_2^{n_2} \re(z_1^{n_2}\bar{z}_2^{n_1})) f_{8}(X) +  z_1 \re(z_1^{n_2}\bar{z}_2^{n_1}) \im(z_1^{n_2}\bar{z}_2^{n_1})f_{9}(X), \nonumber\\
\dot{z}_2 &= -i\omega_2z_2  +  iz_2f_{10}(X)  + z_1^{n_2}\bar{z}_2^{n_1-1}\im(z_1^{n_2}\bar{z}_2^{n_1})f_{11}(X) +  ix_1z_1^{n_2}\bar{z}_2^{n_1-1}f_{12}(X) +  \nonumber \\
&  + x_1 z_2\im(z_1^{n_2}\bar{z}_2^{n_1})f_{13}(X)+ i z_1^{n_2}\bar{z}_2^{n_1-1}\re(z_1^{n_2}\bar{z}_2^{n_1}) f_{14}(X)+  z_2 \re(z_1^{n_2}\bar{z}_2^{n_1}) \im(z_1^{n_2}\bar{z}_2^{n_1}) f_{15}(X),   \nonumber\\
\dot{z}_3&= -i\omega_3z_3  +  iz_3 f_{16}(X) +  x_1 z_3\im(z_1^{n_2}\bar{z}_2^{n_1})f_{17}(X) + z_3\re(z_1^{n_2}\bar{z}_2^{n_1})\im(z_1^{n_2}\bar{z}_2^{n_1})f_{18}(X), \nonumber
\end{align}
\end{footnotesize}
\noindent for  $f_i: \R^{5},0 \to \R$, $i= 0, \ldots, 18$,   and $X=(x_1^2$, $|z_1|^2$, $|z_2|^2$, $\re^2(z_1^{n_2}\bar{z}_2^{n_1})$,   $x_1\re(z_1^{n_2}\bar{z}_2^{n_1})$, $|z_3|^2)$.
\end{theorem}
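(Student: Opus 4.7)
The plan is to apply Theorem~\ref{thm:NORMALFORM} directly, feeding it the generators of $\QQ_\sigma(\s \rtimes (\z_2^\phi \times \z_2^\psi))$ that the four-step algorithm of this subsection has already produced. By that theorem, $\dot x = X(x)$ is formally conjugate to $\dot x = Lx + \sum_{k \geq 2} g_k(x)$ with each $g_k$ homogeneous of degree $k$ and lying in $\QQ_\sigma(\s \rtimes (\z_2^\phi \times \z_2^\psi))$. Summing over $k$, the normal form is therefore the most general formal $\P(\s \rtimes (\z_2^\phi \times \z_2^\psi))$-linear combination of the module generators, with the coefficients in the role of smooth functions $f_i$ of the Hilbert basis of invariants.

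Next I would split into the four cases according to $(a_0, a_1^{n_2} a_2^{n_1}) \in \{\pm 1\}^2$, exactly as recorded in Table~\ref{table-R2C3}. In each case the generators are of the form $L_j$ or $u_i L_j$, where $u_1 = x_1$, $u_4 = \re(z_1^{n_2}\bar{z}_2^{n_1})$, and the identifications $L_0 \equiv H_1,~ L_1 \equiv v_5 H_0,~ \ldots,~ L_{11} \equiv v_5 H_{10}$ come from step~3 of the algorithm. Recalling that $v_5 = \im(z_1^{n_2}\bar{z}_2^{n_1})$, and using the explicit coordinate expressions of $H_0, \ldots, H_{11}$ written at the start of the subsection, each generator has a single nontrivial coordinate block. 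Multiplying by a generic smooth invariant coefficient and summing coordinate-wise then reproduces the four displayed systems line by line.

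The main work here is bookkeeping rather than a true obstacle: one must track the correct invariant ring in each type. In Types~B and D the invariant $u_4$ enters the ring only through $u_4^2$, so an odd power of $u_4$ must be absorbed as a front factor into the generator rather than into the coefficient; this is precisely what produces the prefactors $\re(z_1^{n_2}\bar{z}_2^{n_1}) \im(z_1^{n_2}\bar{z}_2^{n_1})$, $\bar{z}_1^{n_2-1} z_2^{n_1} \re(z_1^{n_2}\bar{z}_2^{n_1})$, and $z_1^{n_2}\bar{z}_2^{n_1-1} \re(z_1^{n_2}\bar{z}_2^{n_1})$ that appear in those normal forms. The analogous absorption of $u_1 = x_1$ through $u_1^2$, and of the mixed invariant $u_1 u_4$ in Type~D, handles Types~C and D. Once these absorptions are read off from the table, the remaining coefficients depend only on the full-group invariants, which are precisely the arguments of the $f_i$ in each displayed system, completing the identification.
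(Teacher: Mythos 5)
Your proposal is correct and follows essentially the same route as the paper: the paper's proof of this theorem is precisely the preceding four-step computation of the generators in Table~1 followed by an application of Theorem~\ref{thm:NORMALFORM}, writing the general element of $\QQ_\sigma(\s \rtimes (\z_2^\phi \times \z_2^\psi))$ as an invariant-coefficient combination of those generators. Your observation about absorbing odd powers of $u_1$ and $u_4$ into the generators (rather than the coefficient ring) in Types B, C and D is exactly the bookkeeping the paper performs implicitly when passing from Table~1 to the displayed systems.
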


We remark that the value of $a_3$ has no effect on the normal forms. This is because $a_3$ in $\psi$ is acting on the algebraically independent part of $L.$ Hence, the results of this subsection  generalize to systems on $\R^2 \times \C^n$, $n>3$, which is done in the next subsection.

%%%%%%%%%%%%%%%%%%%%%%%%%%%%%%%%%%
\subsection{Resonance of type $(n_1: n_2: 0)$ in $\R^2 \times \C^n$}

Here we extend the previous case to $n > 3$. 
%In this subsection we compute the $\z_2^{\phi} \times %\z_2^{\psi}-$reversible-equivariant normal form of systems with linear part given in  %(\ref{L}) for $n \geq 3.$ 

The action of $\z_2^\phi \times \z_2^\psi$ on $\R^2 \times \C^n$ is given by \eqref{phiaction}, \eqref{psiactiononC}. We assume $n_1 \omega_2 - n_2 \omega_1 = 0,$ with $n_1, n_2 \in \mathbb{N}$ nonzero, and $\omega_3, \ldots, \omega_n$  algebraically independent. In this case,  the system (\ref{eq:SYSTEM})  is also called $(n_1:n_2:0)-$resonant. 

By Proposition~\ref{prop:SFORM}, $\s = \r \times \T^{n-1}.$ The diagonal action of $\r \times \T^{n-1}$ on $\R^2 \times \C^n$ is given from the standard action of $\r$ on $\R^2$ as in (\ref{S-actiononR2}) and the action of $\T^{n-1}$ on $\C^n$ is given by
\begin{equation*}
(\theta_1, \ldots, \theta_{n-1}) (z_1,z_2, \ldots, z_n)= (e^{in_1\theta_1}z_1, e^{in_2\theta_1}z_2, e^{i\theta_2}z_3, \ldots, e^{i\theta_{n-1}}z_n), \end{equation*}

\noindent with $(\theta_1, \ldots, \theta_{n-1}) \in \T^{n-1}.$

In this case, for the epimorphisms $\sigma_1$ and $\sigma_2$ given in (\ref{eq:EPI}) and $\sigma$ given in \eqref{sigma}, we obtain also four types of normal forms  (see Table \ref{table-typeNF}) with generators for $\QQ_\sigma( \s \rtimes (\z^\phi \times \z_2^\psi))$ and $\P( \s \rtimes (\z^\phi \times \z_2^\psi))$ given in Table 
\ref{table-R2Cn}, where:\\
\begin{small}
$ u_1(x,z)=x_1, \quad u_2(x,z)=|z_1|^2, \quad u_3(x,z)=|z_2|^2, \quad u_4(x,z)=\re(z_1^{n_2}\bar{z}_2^{n_1}),$

 $u_5=\im(z_1^{n_2}\bar{z}_2^{n_1}) , \quad u_6(x,z)= |z_3|^2, \quad u_7(x,z)= |z_4|^2,  \ldots,  u_{n+3}(x,z)= |z_n|^2$,

$H_0(x,z)=\begin{pmatrix} 0,1,0,\ldots,  0\end{pmatrix}$,  \hspace{0.2cm}
$H_1(x,z)=\begin{pmatrix} x_1\im(z_1^{n_2}\bar{z}_2^{n_1}), x_2\im(z_1^{n_2}\bar{z}_2^{n_1}) ,0,\ldots,  0 \end{pmatrix}$, 

$H_2(x,z)= \begin{pmatrix} 0,0,iz_1, 0, \ldots, 0 \end{pmatrix}$, \hspace{0.2cm}
$H_3(x,z)= \begin{pmatrix} 0,0, i\bar{z}_1^{n_2-1} z_2^{n_1},0,\ldots, 0 \end{pmatrix}$, 

$H_4(x,z)=\begin{pmatrix} 0,0, z_1 \im(z_1^{n_2}\bar{z}_2^{n_1}) ,0,\ldots, 0 \end{pmatrix}$,

$H_5(x,z)=\begin{pmatrix} 0,0,\bar{z}_1^{n_2-1} z_2^{n_1}\im(z_1^{n_2}\bar{z}_2^{n_1}) ,0,\ldots, 0 \end{pmatrix}$,  ~
$H_6(x,z)=\begin{pmatrix} 0,0,0 , iz_2,0, \ldots, 0\end{pmatrix}$, 

$H_7(x,z)=\begin{pmatrix} 0,0,0 , iz_1^{n_2}\bar{z}_2^{n_1-1},0, \ldots, 0\end{pmatrix}$,  \hspace{0.2cm}
$H_8(x,z)=\begin{pmatrix} 0,0,0 , z_2\im(z_1^{n_2}\bar{z}_2^{n_1}) ,0, \ldots, 0\end{pmatrix}$, 

$H_9(x,z)=\begin{pmatrix} 0,0,0, z_1^{n_2}\bar{z}_2^{n_1-1} \im(z_1^{n_2}\bar{z}_2^{n_1}) ,0,\ldots, 0\end{pmatrix}$, ~~
$H_{10}(x,z)= \begin{pmatrix} 0,0,0,0,iz_3,0, \ldots, 0 \end{pmatrix}$, 

$H_{11}(x,z)=\begin{pmatrix} 0,0,0,0, z_3 \im(z_1^{n_2}\bar{z}_2^{n_1}),0,\ldots,0 \end{pmatrix}$,\hspace{0.2cm}
$H_{12}(x,z)= \begin{pmatrix} 0,0,0,0,0,iz_4, \ldots, 0 \end{pmatrix}$, 

$H_{13}(x,z)=\begin{pmatrix} 0,0,0,0, 0,z_4 \im(z_1^{n_2}\bar{z}_2^{n_1}),0,\ldots,0 \end{pmatrix}$, ~
$\hdots$~, 

$H_{2n+4}(x,z) =  \begin{pmatrix} 0,0,0,0, \ldots, 0,i z_n \end{pmatrix}$, \hspace{0.2cm} 
$H_{2n+5}(x,z)=\begin{pmatrix} 0,0,0,0, \ldots,0,z_n \im(z_1^{n_2}\bar{z}_2^{n_1}) \end{pmatrix}$.
\end{small}

\begin{small} 
	\begin{table}[h]
		\centering
		\begin{tabular}{|c|l|c|c|}
			\hline
			\multirow{7}{*}{$a_0=1$} & $a_1=a_2=1$          & ---          & \multirow{2}{*}{Type A} \\ \cline{2-3}
			&\multirow{2}{*}{$a_1=a_2=-1$}& $n_1 + n_2$ even&              \\ \cline{3-4} 
			&                       & $n_1 + n_2$ odd & Type B       \\ \cline{2-4} 
			&\multirow{2}{*}{$a_1=-a_2=1$}& $n_1$ even      & Type A       \\ \cline{3-4} 
			&                       & $n_1$ odd       & Type B       \\ \cline{2-4} 
			& \multirow{2}{*}{$a_1=-a_2 =-1$}& $n_2$ even      & Type A       \\ \cline{3-4} 
			&                       & $n_2$ odd       & Type B       \\ \hline
			\multirow{7}{*}{$a_0=-1$}& $a_1=a_2=1$           & ---          & \multirow{2}{*}{Type C}\\ \cline{2-3}
			&\multirow{2}{*}{$a_1=a_2=-1$}& $n_1 + n_2$ even&              \\ \cline{3-4} 
			&                       & $n_1 + n_2$ odd & Type D       \\ \cline{2-4} 
			&\multirow{2}{*}{$ a_1=-a_2=1$}& $n_1$ even      & Type C       \\ \cline{3-4} 
			&                       & $n_1$ odd       & Type D       \\ \cline{2-4} 
			& \multirow{2}{*}{$a_1=-a_2=-1$}& $n_2$ even      & Type C       \\ \cline{3-4} 
			&                       & $n_2$ odd       & Type D       \\ \hline
		\end{tabular}
		\vspace{0.1cm}
		\caption{Types of $\z_2 \times \z_2-$reversible-equivariant vector fields.}
		\label{table-typeNF}
	\end{table}
\end{small}

\begin{small} 
	\begin{table}[h]
		\centering
		\begin{tabular}{|c|c|c|}
			\hline
			Type & $\QQ( \s \rtimes (\z^\phi \times \z_2^\psi))$ & $\P( \s \rtimes (\z^\phi \times \z_2^\psi))$\\
			\hline
			A &   $H_j$, $~0 \leq j \leq 2n+5$     &   $u_1, u_2, u_3, u_4,$\\
			&                                   &   $u_6, u_7, \ldots, u_{n+3}$    \\
			\hline
			B  &   $H_k$, $u_4 H_\ell~$ for $k=0,2,5,6,9,10,12,14, \ldots, 2n+4 $     &  $u_1,u_2,u_3,u_4^2,$ \\
			& and $\ell=1,3,4,7,8,11,13,15, \ldots, 2n+5$                             &  $u_6, u_7, \ldots, u_{n+3}$\\
			\hline
			C  &  $u_1H_0$, $~H_k~$ for $1 \leq k \leq 2n+5$     & $u_1^2, u_2,u_3,u_4,$ \\
			&                                                  &   $u_6, u_7, \ldots, u_{n+3} $\\
			\hline
			D  &  $H_k$, $~~u_1H_\ell$,  $~u_4 H_\ell~$ for $k=2,5,6,9,10,12,14, \ldots, 2n+4$  &  $u_1^2, u_2,u_3,u_4^2,u_1u_4,$ \\
			&  and $\ell=0,1,3,4,7,8,11,13,15, \ldots, 2n+5$                             &   $u_6, u_7, \ldots, u_{n+3}$\\
			\hline
		\end{tabular}
		\vspace{0.1cm}
		\caption{Generators on $\R^2 \times \C^n$.}
		\label{table-R2Cn}
	\end{table}
\end{small}

Therefore, we have the following result:

\begin{theorem}  Let $\dot{x}=X(x)$ be a  {\rm $\z_2^\phi \times \z_2^\psi-$}reversible-equivariant system, with $L=(dX)_0$ defined in (\ref{L}) for $n > 3$  and $(n_1:n_2:0)$-resonant. Then this system is formally conjugate to one of the following:\\

\noindent Type A:
\begin{footnotesize}
\begin{align}
\dot{x}_1 &= x_2  +  x_1\im(z_1^{n_2}\bar{z}_2^{n_1})f_{0}(X),\nonumber\\
\dot{x}_2 &= f_{1}(X)   +    x_2\im(z_1^{n_2}\bar{z}_2^{n_1})f_{0}(X),\nonumber\\
\dot{z}_1 &= -i\omega_1z_1 ~ + ~ iz_1f_{2}(X)  ~+ ~  i\bar{z}_1^{n_2-1}z_2^{n_2}f_{3}(X)  ~+ ~ z_1\im(z_1^{n_2}\bar{z}_2^{n_1})f_{4}(X) ~ +  \nonumber \\ &+~\bar{z}_1^{n_2-1}z_2^{n_2}\im(z_1^{n_2}\bar{z}_2^{n_1})f_{5}(X),\nonumber\\
\dot{z}_2 &= -i\omega_2z_2  ~+~  iz_2f_{6}(X) ~  + ~ iz_1^{n_2}\bar{z}_2^{n_1-1}f_{7}(X)  ~+ ~ z_2\im(z_1^{n_2}\bar{z}_2^{n_1})f_{8}(X) ~+ \nonumber \\ 
&+ ~z_1^{n_2}\bar{z}_2^{n_1-1}\im(z_1^{n_2}\bar{z}_2^{n_1})f_{9}(X),  \nonumber\\
\dot{z}_3&= -i\omega_3z_3 ~ + ~ i z_3f_{10}(X) ~+ ~ z_3\im(z_1^{n_2}\bar{z}_2^{n_1}) f_{11}(X),  \nonumber \\
\vdots & \nonumber\\
\dot{z}_n&= -i\omega_nz_n ~ + ~ i z_nf_{2n+4}(X) ~+ ~ z_n\im(z_1^{n_2}\bar{z}_2^{n_1}) f_{2n+5}(X),  \nonumber 
\end{align}
\end{footnotesize}
\noindent for $f_i: \R^{n+2},0 \to \R$, $0 \leq i \leq  2n +5$,  and  $X=(x_1$, $|z_1|^2$, $|z_2|^2$, $\re(z_1^{n_2}\bar{z}_2^{n_1})$, $|z_3|^2$, $\ldots$, $|z_n|^2)$.\\

\noindent Type B:
\begin{footnotesize}
\begin{align}
\dot{x}_1 &= x_2  +  x_1 \re(z_1^{n_2}\bar{z}_2^{n_1}) \im(z_1^{n_2}\bar{z}_2^{n_1})f_{0}(X),\nonumber\\
\dot{x}_2 &= f_{1}(X)   +    x_2 \re(z_1^{n_2}\bar{z}_2^{n_1}) \im(z_1^{n_2}\bar{z}_2^{n_1})f_{0}(X),\nonumber\\
\dot{z}_1 &= -i\omega_1z_1  +  iz_1f_{2}(X)  + \bar{z}_1^{n_2-1}z_2^{n_2}\im(z_1^{n_2}\bar{z}_2^{n_1})f_{3}(X) +  i\bar{z}_1^{n_2-1}z_2^{n_2}\re(z_1^{n_2}\bar{z}_2^{n_1})f_{4}(X) ~  +  \nonumber\\
& +  z_1\re(z_1^{n_2}\bar{z}_2^{n_1})\im(z_1^{n_2}\bar{z}_2^{n_1})f_{5}(X), \nonumber\\
\dot{z}_2 &= -i\omega_2z_2  +  iz_2f_{6}(X)  + z_1^{n_2}\bar{z}_2^{n_1-1}\im(z_1^{n_2}\bar{z}_2^{n_1})f_{7}(X) +  i z_1^{n_2}\bar{z}_2^{n_1-1} \re(z_1^{n_2}\bar{z}_2^{n_1})f_{8}(X) ~ + \nonumber\\
&+  z_2\re(z_1^{n_2}\bar{z}_2^{n_1})\im(z_1^{n_2}\bar{z}_2^{n_1})f_{9}(X),   \nonumber\\
\dot{z}_3&= -i\omega_3z_3 + i z_3f_{10}(X) +    z_3\re(z_1^{n_2}\bar{z}_2^{n_1})\im(z_1^{n_2}\bar{z}_2^{n_1}) f_{11}(X),    \nonumber\\
\vdots & \nonumber \\
\dot{z}_n&= -i\omega_nz_n + i z_nf_{2n+4}(X) +    z_n\re(z_1^{n_2}\bar{z}_2^{n_1})\im(z_1^{n_2}\bar{z}_2^{n_1}) f_{2n+5}(X),    \nonumber
\end{align}
\end{footnotesize}
\noindent for $f_i: \R^{n+2},0 \to \R$, $~0 \leq i \leq 2n+5 $,  and $X=(x_1$, $|z_1|^2$, $|z_2|^2$, $\re^2(z_1^{n_2}\bar{z}_2^{n_1})$, $|z_3|^2$, $\ldots$,~$|z_n|^2)$.\\

\noindent Type C:
\begin{footnotesize}
\begin{align}
\dot{x}_1 &= x_2  ~+ ~ x_1\im(z_1^{n_2}\bar{z}_2^{n_1})f_{0}(X),\nonumber\\
\dot{x}_2 &= x_1f_{1}(X)  ~ +  ~  x_2\im(z_1^{n_2}\bar{z}_2^{n_1})f_{0}(X),\nonumber\\
\dot{z}_1 &= -i\omega_1z_1  ~+ ~ iz_1f_{2}(X) ~ + ~  i\bar{z}_1^{n_2-1}z_2^{n_2}f_{3}(X) ~ +  ~z_1\im(z_1^{n_2}\bar{z}_2^{n_1})f_{4}(X) ~ +\nonumber \\ 
&+ ~\bar{z}_1^{n_2-1}z_2^{n_2}\im(z_1^{n_2}\bar{z}_2^{n_1})f_{5}(X),\nonumber\\
\dot{z}_2 &= -i\omega_2z_2 ~ + ~ iz_2f_{6}(X)  ~ + ~ iz_1^{n_2}\bar{z}_2^{n_1-1}f_{7}(X)  ~+ ~ z_2\im(z_1^{n_2}\bar{z}_2^{n_1})f_{8}(X) ~ + \nonumber \\ &+~z_1^{n_2}\bar{z}_2^{n_1-1}\im(z_1^{n_2}\bar{z}_2^{n_1})f_{9}(X),  \nonumber\\
\dot{z}_3&= -i\omega_3z_3  ~ +  ~  iz_3f_{10}(X)   ~+ ~  z_3\im(z_1^{n_2}\bar{z}_2^{n_1}) f_{11}(X),\nonumber\\
\vdots & \nonumber\\
\dot{z}_3&= -i\omega_nz_n  ~ + ~   iz_nf_{2n+4}(X)  ~ +  ~ z_n\im(z_1^{n_2}\bar{z}_2^{n_1}) f_{2n+5}(X),\nonumber
\end{align}
\end{footnotesize}
\noindent for  $f_i: \R^{n+2},0 \to \R$, $~0 \leq i \leq 2n+5$,   $~X=(x_1^2$, $|z_1|^2$, $|z_2|^2$, $\re(z_1^{n_2}\bar{z}_2^{n_1})$, $|z_3|^2$, $\ldots$,~$|z_n|^2)$.\\

\noindent Type D:
\begin{footnotesize}
\begin{align}
\dot{x}_1 &= x_2  +  x_1^2\im(z_1^{n_2}\bar{z}_2^{n_1})f_{0}(X)  +   x_1 \re(z_1^{n_2}\bar{z}_2^{n_1}))\im(z_1^{n_2}\bar{z}_2^{n_1}) f_{1}(X) \nonumber\\
\dot{x}_2 &= x_1f_{2}(X)   +    x_1 x_2\im(z_1^{n_2}\bar{z}_2^{n_1})f_{0}(X) +  \re(z_1^{n_2}\bar{z}_2^{n_1}))f_{3}(X)  +    x_2\re(z_1^{n_2}\bar{z}_2^{n_1}))\im(z_1^{n_2}\bar{z}_2^{n_1})f_{1}(X),   \nonumber\\
\dot{z}_1 &= -i\omega_1z_1  +  iz_1f_{4}(X)  + \bar{z}_1^{n_2-1}z_2^{n_2}\im(z_1^{n_2}\bar{z}_2^{n_1})f_{5}(X) +   ix_1\bar{z}_1^{n_2-1}z_2^{n_2}f_{6}(X) + \nonumber \\
&  +  x_1 z_1\im(z_1^{n_2}\bar{z}_2^{n_1})f_{7}(X)+ i\bar{z}_1^{n_2-1}z_2^{n_2} \re(z_1^{n_2}\bar{z}_2^{n_1})) f_{8}(X) +  z_1 \re(z_1^{n_2}\bar{z}_2^{n_1})) \im(z_1^{n_2}\bar{z}_2^{n_1})f_{9}(X) \nonumber\\
\dot{z}_2 &= -i\omega_2z_2  +  iz_2f_{10}(X)  + z_1^{n_2}\bar{z}_2^{n_1-1}\im(z_1^{n_2}\bar{z}_2^{n_1})f_{11}(X) +  ix_1z_1^{n_2}\bar{z}_2^{n_1-1}f_{12}(X) +  \nonumber \\
& + x_1 z_2\im(z_1^{n_2}\bar{z}_2^{n_1})f_{13}(X)+ i z_1^{n_2}\bar{z}_2^{n_1-1}\re(z_1^{n_2}\bar{z}_2^{n_1}) f_{14}(X)+  z_2 \re(z_1^{n_2}\bar{z}_2^{n_1}) \im(z_1^{n_2}\bar{z}_2^{n_1}) f_{15}(X),   \nonumber\\
\dot{z}_3&= -i\omega_2z_3  +  iz_3 f_{16}(X) +  x_1 z_3\im(z_1^{n_2}\bar{z}_2^{n_1})f_{17}(X) + z_3\re(z_1^{n_2}\bar{z}_2^{n_1})\im(z_1^{n_2}\bar{z}_2^{n_1})f_{18}(X), \nonumber\\
\vdots& \nonumber\\
\dot{z}_n&= -i\omega_nz_n  +  iz_n f_{3n+7}(X) +  x_1 z_n\im(z_1^{n_2}\bar{z}_2^{n_1})f_{3n+8}(X) + z_n\re(z_1^{n_2}\bar{z}_2^{n_1})\im(z_1^{n_2}\bar{z}_2^{n_1})f_{3n+9}(X), \nonumber
\end{align}
\end{footnotesize}
for $f_i: \R^{n+3},0 \to \R$, $~0 \leq i \leq 3n+9$,  and $X=(x_1^2$, $|z_1|^2$, $|z_2|^2$, $\re^2(z_1^{n_2}\bar{z}_2^{n_1})$,  $ x_1\re(z_1^{n_2}\bar{z}_2^{n_1})$, $|z_3|^2$,  $\ldots$, $|z_n|^2)$.
\end{theorem}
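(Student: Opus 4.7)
The plan is to apply Theorem~\ref{thm:NORMALFORM} to the group $\mathbf{S} \rtimes (\z_2^\phi \times \z_2^\psi)$ and to compute the generators of the module $\QQ_\sigma(\mathbf{S} \rtimes (\z_2^\phi \times \z_2^\psi))$ by running exactly the algorithm presented in Section~\ref{sec:SEMIDIRECT}, in parallel with what was done for $n=3$. First, Proposition~\ref{prop:SFORM} gives $\mathbf{S} = \r \times \T^{n-1}$, since the Jordan--Chevalley decomposition of $L$ produces a nilpotent block of size $2$ and a diagonal semisimple block whose only algebraic relation among the eigenvalues is $n_1\omega_2 - n_2\omega_1 = 0$. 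From standard invariant theory for tori (as in \cite[XII]{golub}) together with \cite[Lemma 3.3]{BMZ}, one writes down the Hilbert basis $u_1,\ldots,u_{n+3}$ of $\P(\mathbf{S})$ and the corresponding equivariant generators $H_0,\ldots,H_{2n+5}$ of $\PP(\mathbf{S})$, as listed before the statement; note that the resonance only couples $z_1$ and $z_2$, while each remaining $z_j$ contributes the single invariant $|z_j|^2$ and the two equivariants $z_j, iz_j$.

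Next I would carry out the two-step projection exactly as in the $n=3$ case. Step one: apply the operators $\tilde{R},\tilde{S}$ associated to $\kappa=\phi$. Each $u_i$ other than $u_5 = \im(z_1^{n_2}\bar z_2^{n_1})$ is $\phi$-invariant, so $\tilde{S}$ kills all but $u_5$, and using $u_5^2 = u_2^{n_2}u_3^{n_1} - u_4^2$ one recovers a Hilbert basis of $\P(\mathbf{S}\rtimes \z_2^\phi)$ generated by $u_1,u_2,u_3,u_4,u_6,\ldots,u_{n+3}$. Then, applying $T$ with $\kappa=\phi$ to the generators $\{H_j, u_5 H_j\}$ of $\PP(\mathbf{S})$ as a module over $\P(\mathbf{S}\rtimes \z_2^\phi)$, the usual parity argument (each $H_j$ is either $\phi$-equivariant or $\phi$-reversible) keeps precisely those combinations that appear in the list of $H_k$'s and $u_4 H_\ell$'s (before the $\psi$-projection) giving generators of $\QQ_{\sigma_1}(\mathbf{S}\rtimes \z_2^\phi)$.

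Step two: apply $\check{R},\check{S}$ and $\check{T}$ with $\kappa=\psi$. For $\psi$ acting as in (\ref{psiactiononC}), each $u_i$ with $i\ge 2$ except $u_4,u_5$ is automatically $\psi$-invariant because $|z_j|^2$ is independent of the sign $a_j$. The only coefficients that matter for deciding whether a given generator survives after $\check{T}$ or must be multiplied by $u_1$ and/or $u_4$ are $a_0$ and $a_1^{n_2}a_2^{n_1}$. The four combinations $(a_0, a_1^{n_2}a_2^{n_1})\in\{\pm 1\}\times\{\pm 1\}$ reproduce the four types A, B, C, D of the tabulation, and a direct inspection of whether $a_1^{n_2}a_2^{n_1}$ equals $+1$ or $-1$ under each choice of $a_1,a_2$ and parities of $n_1,n_2$ yields Table~\ref{table-typeNF}. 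The generators of $\QQ_\sigma$ and the Hilbert basis of $\P$ then read off exactly as in Table~\ref{table-R2Cn}; the values of $a_3,\ldots,a_n$ are irrelevant since they act only on the algebraically independent $z_j$ whose invariants $|z_j|^2$ and equivariants $iz_j$ are $\psi$-invariant/reversible regardless of sign.

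The main bookkeeping obstacle is just translating the abstract generators of $\QQ_\sigma(\mathbf{S}\rtimes(\z_2^\phi\times\z_2^\psi))$ into the explicit component form displayed in the theorem: one must pair each generator $L_j$ (and $u_1 L_j$, $u_4 L_j$ where needed) with a smooth coefficient in $\P(\mathbf{S}\rtimes(\z_2^\phi\times\z_2^\psi))$, and then expand componentwise so that $\dot x_1, \dot x_2, \dot z_1, \dots, \dot z_n$ appear with the indicated monomials in $z_1^{n_2}\bar z_2^{n_1}$ together with the base invariants $X$ proper to each type. Since the $(z_3,\ldots,z_n)$ block is completely analogous to the $z_3$ block treated in the previous subsection and only the arity of the coefficient functions $f_i$ changes, applying Theorem~\ref{thm:NORMALFORM} at each degree $k\geq 2$ then concludes the proof.
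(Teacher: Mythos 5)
Your proposal is correct and follows essentially the same route as the paper, which obtains this theorem by rerunning the Section~3 algorithm exactly as in the $\R^2\times\C^3$ case: the extra coordinates $z_4,\ldots,z_n$ contribute only the invariants $|z_j|^2$ and the equivariants $z_j, iz_j$, and the four types are again governed by the signs of $a_0$ and $a_1^{n_2}a_2^{n_1}$, with $a_3,\ldots,a_n$ playing no role. One minor slip in your second paragraph: the $\phi$-reversible generators produced by the first projection are the $H_k$'s together with the $u_5H_\ell$'s (not $u_4H_\ell$'s); the factor $u_4$ only enters after the $\psi$-projection, in types B and D.
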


%%%%%%%%%%%%%%%%%%%%%%%%%%%%%%%%%%%%%%%%%%%%%%%%%%%%%%%%%%%%%%%

\subsection{Resonance of type $(n_1: n_2 - m_1: m_2)$ in $\R^2 \times \C^4$} 

We assume $n_1 \omega_2 - n_2 \omega_1 = m_1 \omega_4 - m_2 \omega_3 = 0,$ with $n_1, n_2, m_1, m_2 \in \mathbb{N}$ nonzero. Under these conditions, the system (\ref{eq:SYSTEM}) is called $(n_1:n_2:m_1:m_2)$-resonant. 

By Proposition~\ref{prop:SFORM}, $\s = \r \times \T ^2$ and its action on $\R^2 \times \C^4$ is determined from the standard action of $\r$ on $\R^2$ given in (\ref{S-actiononR2}) and the diagonal action of $\T^2$  on $\C^4$ given by  
\begin{equation*}
(\theta_1, \theta_2) (z_1, z_2, z_3, z_4) = (e^{in_1\theta_1}z_1,e^{in_2\theta_1} z_2, e^{im_1\theta_2}z_3,e^{im_2\theta_2} z_4).
\end{equation*}

%Generators for the invariants and equivariants under this action are given by 
%\begin{equation}
%\label{eq:base.hilbert.s1.ressonante}
% |z_1|^2, |z_2|^2, |z_3|^2, |z_4|^2, \re(z_1^{n_2}\bar{z}_2^{n_1}), \im(z_1^{n_2}\bar{z}_2^{n_1}), \re(z_3^{m_2}\bar{z}_4^{m_1}), \im(z_3^{m_2}\bar{z}_4^{m_1})
%\end{equation}
%and 
%\begin{equation}
%\label{eq:geradores.equiv.s1.ressonante}
%\begin{pmatrix} z_1 ,0  \end{pmatrix}, \begin{pmatrix} z_1 i ,0  \end{pmatrix}, \begin{pmatrix} \bar{z}_1^{n_2-1} z_2^{n_1},0  \end{pmatrix}, \begin{pmatrix} \bar{z}_1^{n_2-1} z_2^{n_1}i ,0  \end{pmatrix}, \begin{pmatrix} 0 , z_2 \end{pmatrix}, \begin{pmatrix} 0,z_2i \end{pmatrix}, 
%\end{equation}
%$$
%\begin{pmatrix} 0 ,z_1^{n_2}\bar{z}_2^{n_1-1} \end{pmatrix} , \begin{pmatrix} 0 ,z_1^{n_2}\bar{z}_2^{n_1-1}i \end{pmatrix},
%$$
%respectivamente.

% Aqui nos equivariantes falta os geradores pela segunda ação de T

We follow the same steps as in the previous subsections, so here we shall omit the details.

%Again we consider $\sigma_1$ and $\sigma_2$ as defined in (\ref{eq:EPI}) and $\sigma$ given in \eqref{sigma}. As in previous subsections, we want to determine a set of generators for $\QQ_\sigma (\s \rtimes (\z_2^\phi \times \z_2^\psi))$ over $\P (\s \rtimes (\z_2^\phi \times \z_2^\psi))$ by using the algorithm presented in Subsection \ref{sec:ALGORITHM}. However, in this subsection we omit the details. 

%To begin, we consider the actions of $\z_2^\phi$ and $\z_2^\psi$ on $\R^2 \times \C^4$ as in \eqref{phiaction} and \eqref{psiactiononC}, respectively, for $n=4$. For $(x,z) = (x_1,x_2,z_1,z_2,z_3,z_4),$

The polynomial functions
$$
u_1(x,z)=x_1, ~u_2(x,z)= |z_1|^2,~u_3(x,z)= |z_2|^2, ~u_4(x,z)=\re(z_1^{n_2}\bar{z}_2^{n_1}), 
$$
$$
u_5(x,z)=\im(z_1^{n_2}\bar{z}_2^{n_1}),~u_6(x,z)=|z_3|^2,~u_7(x,z)=|z_4|^2,
$$
$$
u_8(x,z)=\re(z_3^{m_2}\bar{z}_4^{m_1}) \quad \text{and} \quad u_9(x,z)=\im(z_3^{m_2}\bar{z}_4^{m_1})
$$
form a Hilbert basis for $\P(\s)$.  \\

The generators of $\PP(\s)$ over $\P(\s)$ are:

$H_0(x,z)=\begin{pmatrix} x_1 , x_2 ,0,0,0,0 \end{pmatrix}$, \hspace{0.2cm}
$H_1(x,z)=\begin{pmatrix} 0 , 1 ,0,0,0,0\end{pmatrix}$, 

$H_2(x,z)=\begin{pmatrix} 0,0, z_1, 0 ,0,0  \end{pmatrix}$,  \hspace{0.2cm}
$H_3(x,z)=\begin{pmatrix} 0,0,iz_1, 0,0,0  \end{pmatrix}$, 

$H_4(x,z)=\begin{pmatrix} 0, 0, \bar{z}_1^{n_2-1} z_2^{n_1} ,0,0,0  \end{pmatrix}$,\hspace{0.2cm} 
$H_5(x,z)=\begin{pmatrix} 0,0,i\bar{z}_1^{n_2-1} z_2^{n_1} ,0,0,0  \end{pmatrix}$,

$H_6(x,z)=\begin{pmatrix} 0,0,0 , z_2,0,0 \end{pmatrix}$, \hspace{0.2cm} 
$H_7(x,z)=\begin{pmatrix} 0,0,0 , iz_2,0,0 \end{pmatrix}$, 
 
$H_8(x,z)=\begin{pmatrix} 0,0,0 , z_1^{n_2}\bar{z}_2^{n_1-1},0,0 \end{pmatrix}$,\hspace{0.2cm}  
$H_9(x,z)=\begin{pmatrix} 0,0,0, iz_1^{n_2}\bar{z}_2^{n_1-1},0,0 \end{pmatrix}$, 

$H_{10}(x,z)=\begin{pmatrix} 0,0,0,0,z_3,0 \end{pmatrix}$, \hspace{0.2cm}
$H_{11}(x,z)=\begin{pmatrix} 0,0,0,0,i z_3,0 \end{pmatrix}$.

$H_{12}(x,z)=\begin{pmatrix} 0, 0, 0,0,\bar{z}_3^{m_2-1} z_4^{m_1} ,0 \end{pmatrix}$,\hspace{0.2cm} 
$H_{13}(x,z)=\begin{pmatrix} 0,0,0,0,  i\bar{z}_3^{m_2-1} z_4^{m_1} ,0  \end{pmatrix}$,

$H_{14}(x,z)=\begin{pmatrix} 0,0,0,0,0 , z_4 \end{pmatrix}$, \hspace{0.2cm} 
$H_{15}(x,z)=\begin{pmatrix} 0,0,0,0,0 , iz_4 \end{pmatrix}$, 
 
$H_{16}(x,z)=\begin{pmatrix} 0,0,0,0 , z_3^{m_2}\bar{z}_4^{m_1-1} \end{pmatrix}$, \hspace{0.2cm}  
$H_{17}(x,z)=\begin{pmatrix} 0,0,0,0, iz_3^{m_2}\bar{z}_4^{m_1-1} \end{pmatrix}.$ 

\quad

\noindent Therefore, the generators of $\QQ_{\sigma_1}(\s \rtimes \z_2^\phi)$ over $\P(\s \rtimes \z_2^\phi)$ are:
$$
H_i(x,z), ~~ u_5(x,z) H_j(x,z) ~~ \text{e}~~ u_9(x,z)H_j(x,z),
$$
for $~i=1,3,5,7,9,11,13,15,17~$ e $~j=0,2,4,6,8,10,12,14,16$.

\noindent A Hilbert basis for $\P(\s \rtimes \z_2^\phi)$ is $\{v_1, \ldots, v_8\},$ where
\begin{equation*} 
v_1=u_1 , ~~v_2=u_2, ~~v_3=u_3,  ~~v_4=u_4, ~~ v_5=u_6,
\end{equation*}
$$
v_6=u_7 , ~~v_7=u_8~~ \text{e}~~v_8=u_5u_9,
$$ and a Hilbert basis for $\P(\s \rtimes (\z_2^\phi \times \z_2^\psi))$ is given by the polynomial functions:

$(1+ a_0)v_1$, $~~v_2$,$~~ v_3$, $ ~~(1 + a_1^{n_2} a_2^{n_1})  v_4$, $~~v_5$,$~~v_6$, $~~(1 + a_3^{m_2} a_4^{m_1}) v_7^2$ ,  
$(1 + a_1^{n_2} a_2^{n_1}a_3^{m_2} a_4^{m_1}) v_8$,  $~~ \displaystyle \hspace*{.4cm} (1 -  a_0)v_1^2$, $~~ (1 - a_1^{n_2} a_2^{n_1}) v_4^2$, $~~(1 - a_3^{m_2} a_4^{m_1}) v_7^2$,

$( 1 -  a_1^{n_2} a_2^{n_1}a_3^{m_2} a_4^{m_1}) v_8^2$,
$ (1-a_0) (1- a_1^{n_2} a_2^{n_1}) v_1 ~v_4$,   

$( 1-a_0)( 1- a_3^{m_2} a_4^{m_1}) v_1~v_7$,

$( 1-a_0)(1- a_1^{n_2} a_2^{n_1} a_3^{m_2} a_4^{m_1}) v_1~v_8$,

$( 1- a_1^{n_2} a_2^{n_1}) (1- a_3^{m_2} a_4^{m_1})v_4~v_7$,

$ (1- a_1^{n_2} a_2^{n_1})(1- a_1^{n_2} a_2^{n_1} a_3^{m_2} a_4^{m_1})v_4~v_8$,

$(1- a_3^{m_2} a_4^{m_1})(1- a_1^{n_2} a_2^{n_1} a_3^{m_2} a_4^{m_1})v_7 ~v_8$. \\

The generators for $\QQ_\sigma(\s \rtimes (\z_2^\phi \times \z_2^\psi))$ over $\P(\s \rtimes (\z_2^\phi \times \z_2^\psi))$ are:

$\tilde{J}_{01} (x,z)=(1 + a_0)H_1$, \hspace{0.2cm}
$\tilde{J}_{0i}  =  H_i$,
$\tilde{J}_{0k}=(1 + a_1^{n_2} a_2^{n_1})H_k$,

$\tilde{J}_{0\ell}=(1 + a_3^{m_2} a_4^{m_1})H_\ell$,
$\tilde{J}_{11}=(1 - a_0)v_1 ~H_1$, \hspace{0.2cm}
$\tilde{J}_{1i}  = 0$,

$\tilde{J}_{1k} =(1 - a_0)(1 -  a_1^{n_2} a_2^{n_1}) v_1 ~H_k$,
$\tilde{J}_{1\ell}=(1 - a_0)(1 - a_3^{m_2} a_4^{m_1}) v_1 ~H_l$,

$\tilde{J}_{41}=(1 - a_0) (1 -  a_1^{n_2} a_2^{n_1}) v_4( ~H_1$,\hspace{0.2cm}
$\tilde{J}_{4i}=0$,
$\tilde{J}_{4k} =(1 -  a_1^{n_2} a_2^{n_1}) v_4 ~H_k$,

$\tilde{J}_{4\ell}=(1 -  a_1^{n_2} a_2^{n_1}) (1 - a_3^{m_2} a_4^{m_1}) v_4 ~H_l$,
$\tilde{J}_{71} =(1 -  a_0) (1 - a_3^{m_2} a_4^{m_1}) v_7 ~H_1$, \hspace{0.2cm}

$\tilde{J}_{7i}=0$,
$\tilde{J}_{7k} =(1 -  a_1^{n_2} a_2^{n_1})(1 - a_3^{m_2} a_4^{m_1}) v_7 ~H_k$,

$\tilde{J}_{7\ell}=(1 - a_3^{m_2} a_4^{m_1}) v_7 ~H_\ell$, \hspace{0.2cm}
$\tilde{J}_{8i}=0$,
$\tilde{J}_{81}=(1 - a_0)(1 - a_1^{n_2} a_2^{n_1} a_3^{m_2} a_4^{m_1}) v_8 ~H_1$,

$\tilde{J}_{8k} =\displaystyle  (1 -  a_1^{n_2} a_2^{n_1})(1 - a_1^{n_2} a_2^{n_1} a_3^{m_2} a_4^{m_1}) v_8 ~H_k$, 

$\tilde{J}_{8\ell} =(1 -  a_3^{m_2} a_4^{m_1}) (1 - a_1^{n_2} a_2^{n_1} a_3^{m_2} a_4^{m_1}) v_8 ~H_l$,

$\tilde{J}_{05j} =(1 +  a_1^{n_2} a_2^{n_1}) u_5 ~H_j$, \hspace{0.2cm}
$\tilde{J}_{05r} = u_5 ~H_r$,
$\tilde{J}_{05s}=(1 +  a_1^{n_2} a_2^{n_1}a_3^{m_2} a_4^{m_1}) u_5 ~H_s$,

$\tilde{J}_{15j} =(1 - a_0)  (1 +  a_1^{n_2} a_2^{n_1}) v _1 ~u_5 ~H_s$,
$\tilde{J}_{15r}=(1 - a_0)  v_1\ ~u_5 ~H_r$,

$\tilde{J}_{15s}=(1 - a_0) (1 +  a_1^{n_2} a_2^{n_1}a_3^{m_2} a_4^{m_1}) v_1 ~u_5 ~H_s$,
$\tilde{J}_{45j}=(1 -  a_1^{n_2} a_2^{n_1}) v _4 ~u_5 ~H_j$, \hspace{0.2cm}

$\tilde{J}_{45r} =0$,
$\tilde{J}_{45s}=(1 - a_1^{n_2} a_2^{n_1})( 1 - a_1^{n_2} a_2^{n_1}a_3^{m_2} a_4^{m_1}) v_4 ~u_5 ~H_s$,

$\tilde{J}_{75j} =(1 - a_1^{n_2} a_2^{n_1})(1 - a_3^{m_2} a_4^{m_1}) u_5~ v_7 ~H_j(x,z)$, 
$\tilde{J}_{75r}=0$,

$\tilde{J}_{75s} =(1 - a_3^{m_2} a_4^{m_1}) (1 - a_1^{n_2} a_2^{n_1}a_3^{m_2} a_4^{m_1}) u_5~v_7 ~H_s$,

$\tilde{J}_{85j} =(1 - a_1^{n_2} a_2^{n_1}) (1 - a_1^{n_2} a_2^{n_1} a_3^{m_2} a_4^{m_1}) u_5~ v_8\ ~H_j$, 

$\tilde{J}_{85r} =0$, \hspace{0.2cm}
$\tilde{J}_{85s}=(1 - a_1^{n_2} a_2^{n_1}a_3^{m_2} a_4^{m_1}) u_5~v_8 ~H_s$,

$\tilde{J}_{09j} =(1 +  a_3^{m_2} a_4^{m_1}) u_9 ~H_j$, \hspace{0.2cm}
$\tilde{J}_{09s} =u_9 ~H_j$,

$\tilde{J}_{09r}=(1 +  a_1^{n_2} a_2^{n_1} a_3^{m_2} a_4^{m_1}) u_9 ~H_r$,
$\tilde{J}_{19j}=(1-a_0) (1 +  a_3^{m_2} a_4^{m_1})v_1~ u_9 ~H_j$,

$\tilde{J}_{19r}=(1-a_0)(1 +  a_1^{n_2} a_2^{n_1} a_3^{m_2} a_4^{m_1})v_1~ u_9 ~H_r$,
$\tilde{J}_{19s}=( 1-a_0)  v_1~ u_9 ~H_s$,

$\tilde{J}_{49j}=(1 - a_3^{m_2} a_4^{m_1})v_4~ u_9 ~H_j$,
$\tilde{J}_{49r}=(1 - a_1^{n_2} a_2^{n_1} a_3^{m_2} a_4^{m_1})v_4~ u_9 ~H_r$, \hspace{0.2cm}
$\tilde{J}_{49r} =0$,

$\tilde{J}_{79j}=( 1 - a_3^{m_2} a_4^{m_1}) v_7~u_9 ~H_j$,
$\tilde{J}_{79r}=(1 - a_3^{m_2} a_4^{m_1}) (1 - a_1^{n_2} a_2^{n_1} a_3^{m_2} a_4^{m_1})v_7~  u_9 ~H_r$,

$\tilde{J}_{79s} =0$,
$\tilde{J}_{89j}=(1 - a_3^{m_2} a_4^{m_1}) (1 - a_1^{n_2} a_2^{n_1}a_3^{m_2} a_4^{m_1}) v_8~u_9 ~H_j$,

$\tilde{J}_{89r} =(1 - a_1^{n_2} a_2^{n_1}a_3^{m_2} a_4^{m_1})v_8~ u_9 ~H_r$, \hspace{0.2cm}
$\tilde{J}_{89s} =0$, \\

\noindent for $~i=3,7,11,15$, $~j=0,2,6,10,14$, $~k=5,9$, $~\ell=13,17$, $~r=4,8~$ and $~s=12,16$.

\quad

Again, if $\phi=\psi$ then the data above give  the $\z_2-$reversible normal form. Also, 
if  the vector field of \eqref{eq:SYSTEM} has linearization at the origin $L = (dX)_0$ with only imaginary eigenvalues (no nilpotent part)  then, all the normal forms presented in this paper are rewritten in $\C^n$ for some $n$ to produce the normal forms in this case. In fact, omit the variables $x_1$ and $x_2$ and all the generators that depend only on these variables.
Finally, we notice that the pairs of involutions $(\phi,\psi)$ that anti-commute with $L$ have been considered by assuming  $\omega_i^2 \neq \omega_j^2$ for  $i, j = 1, \ldots , n, $ $i \neq j$. However, these pairs also anti-commute with $L$ if $\omega_i^2 = \omega_j^2$ for some $i, j$,  $i \neq j$. This case corresponds to the $1:1-$resonance, which is also of interest in dynamical systems. 

%We end this paper by noting that if $\left\langle \phi,\psi \right\rangle$ is a finite not abelian group then $\left\langle \phi,\psi \right\rangle = \D_m$ for any $m \geq 3.$ This is an interesting case because it leads to invariant theory for the group $(\s \times \z_m) \rtimes \z_2.$ We believe that this case can be treated similarly to the case $\z_2 \times \z_2-$reversible-equivariant by using results recently obtained in \cite{manoel2}.

%%%%%%%%%%%%%%%%%%%%%%%%%%%%%%%%%%%%%

\end{document}